\documentclass[10pt,reqno]{amsart}
\usepackage[T1]{fontenc}
\usepackage[margin=1in]{geometry}
\usepackage{bbm}
\usepackage{amsfonts}
\usepackage{latexsym, amssymb, amsmath, amscd, amsthm, amsxtra}
\usepackage{mathtools}
\usepackage{enumerate}
\usepackage[all]{xy}
\usepackage{mathrsfs}
\usepackage{fancyhdr}
\usepackage{listings}
\usepackage{hyperref}
\usepackage{soul}
\usepackage{xcolor}
\usepackage{dsfont}
\usepackage{stmaryrd}
\usepackage{bm}
\usepackage{comment}
\usepackage{diagbox}
\usepackage{verbatim}
\usepackage{footmisc}
\usepackage{aliascnt}

\allowdisplaybreaks[4]
\hypersetup{colorlinks=true}
\usepackage{tikz}
\usetikzlibrary{decorations.pathmorphing, decorations.markings, decorations.pathreplacing}
\usetikzlibrary{arrows.meta}

\usepackage[rightcaption]{sidecap}

\usepackage{graphicx, color}

\numberwithin{equation}{section}
\numberwithin{figure}{section}

\makeatletter
\newcommand{\rmnum}[1]{\uppercase{{\expandafter{\romannumeral #1}}}}
\makeatother

\theoremstyle{plain} \newtheorem{theorem}{Theorem}[section]
\newtheorem*{theorem*}{Theorem}
\newaliascnt{lemma}{theorem}
\newtheorem{lemma}[lemma]{Lemma}
\aliascntresetthe{lemma}
\newtheorem*{lemma*}{Lemma}
\newaliascnt{corollary}{theorem}

\aliascntresetthe{corollary}
\newtheorem*{corollary*}{Corollary}
\newaliascnt{proposition}{theorem}

\aliascntresetthe{proposition}
\newtheorem*{proposition*}{Proposition}
\newaliascnt{definition}{theorem}
\newtheorem{definition}[definition]{Definition}
\aliascntresetthe{definition}
\newtheorem*{definition*}{Definition}
\newaliascnt{conjecture}{theorem}

\aliascntresetthe{conjecture}
\newtheorem*{conjecture*}{Conjecture}

\theoremstyle{definition}
\newaliascnt{example}{theorem}

\aliascntresetthe{example}
\newtheorem*{example*}{Example}
\newaliascnt{remark}{theorem}
\newtheorem{remark}[remark]{Remark}
\aliascntresetthe{remark}
\newtheorem*{remark*}{Remark}

\usepackage{cleveref}
\crefname{theorem}{Theorem}{Theorems}
\Crefname{theorem}{Theorem}{Theorems}
\crefname{lemma}{Lemma}{Lemmas}
\Crefname{lemma}{Lemma}{Lemmas}
\crefname{corollary}{Corollary}{Corollaries}
\Crefname{corollary}{Corollary}{Corollaries}
\crefname{proposition}{Proposition}{Propositions}
\Crefname{proposition}{Proposition}{Propositions}
\crefname{definition}{Definition}{Definitions}
\Crefname{definition}{Definition}{Definitions}
\crefname{conjecture}{Conjecture}{Conjectures}
\Crefname{conjecture}{Conjecture}{Conjectures}
\crefname{assumption}{Assumption}{Assumptions}
\Crefname{assumption}{Assumption}{Assumptions}
\crefname{example}{Example}{Examples}
\Crefname{example}{Example}{Examples}
\crefname{remark}{Remark}{Remarks}
\Crefname{remark}{Remark}{Remarks}
\crefname{claim}{Claim}{Claims}
\Crefname{claim}{Claim}{Claims}

 \newcommand{\ol}[1]{\overline{#1} \!\,} \newcommand{\wh}{\widehat}
\newcommand{\wt}{\widetilde}
\renewcommand{\txt}[1]{\text{\rm{#1}}}

\definecolor{darkred}{rgb}{0.9,0,0.3}
\definecolor{darkblue}{rgb}{0,0.3,0.9}

\usepackage{ifthen}
\def\comment#1{\ifthenelse{\isodd{\value{page}}}{\marginpar{\raggedright\scriptsize{\textcolor{darkred}{#1}}}}{\marginpar{\raggedleft\scriptsize{\textcolor{darkred}{#1}}}}}

\renewcommand{\P}{\mathbb{P}}
\newcommand{\E}{\mathbb{E}}
\newcommand{\R}{\mathbb{R}}
\newcommand{\C}{\mathbb{C}}

\newcommand{\Z}{\mathbb{Z}}

\newcommand{\cD}{{\mathcal{D}}}

\newcommand{\cN}{\mathcal{N}}

\newcommand{\sx}{\mathsf x}

\newcommand{\rd}{\mathrm{d}}

\newcommand{\ii}{\mathrm{i}}

 \renewcommand{\leq}{\le}
\renewcommand{\epsilon}{\varepsilon}

\newcommand{\ceil}[1]  {\lceil  {#1} \rceil}

\newcommand{\qq}[1]{[\![{#1}]\!]}

\newcommand{\p}[1]{({#1})}
\newcommand{\pb}[1]{\bigl({#1}\bigr)}
\newcommand{\pB}[1]{\Bigl({#1}\Bigr)}

\newcommand{\pa}[1]{\left({#1}\right)}

\newcommand{\q}[1]{[{#1}]}
\newcommand{\qb}[1]{\bigl[{#1}\bigr]}

\newcommand{\qbb}[1]{\biggl[{#1}\biggr]}

\newcommand{\qa}[1]{\left[{#1}\right]}

\newcommand{\h}[1]{\{{#1}\}}

\newcommand{\ha}[1]{\left\{{#1}\right\}}

\newcommand{\abs}[1]{\lvert #1 \rvert}

\newcommand{\absBB}[1]{\Biggl\lvert #1 \Biggr\rvert}
\newcommand{\absa}[1]{\left\lvert #1 \right\rvert}

\newcommand{\norm}[1]{\lVert #1 \rVert}
\newcommand{\normb}[1]{\bigl\lVert #1 \bigr\rVert}

\newcommand{\norma}[1]{\left\lVert #1 \right\rVert}

\newcommand{\bv}{{\bf{v}}}

\newcommand{\be}{\begin{equation}}
	\newcommand{\ee}{\end{equation}}

\newcommand{\oo}{\mathrm{o}}

\newcommand{\Zn}{\wt \Z_n}

\newcommand{\ZN}{\mathbb{Z}_N}

\newcommand{\Tr}{\mathrm{Tr}}

\newcommand{\blambda}{\boldsymbol{\lambda}}

\allowdisplaybreaks

\numberwithin{equation}{section}

\begin{document}
    \title{Poisson statistics of one-dimensional random band matrices}
	
	\author{Jiaqi Fan$^\star$}	
	\author{Guangyi Zou$^\dagger$}

	\thanks{\hspace{-9.3pt}$^\star$Qiuzhen College, Tsinghua University, Beijing, China, \href{mailto:fanjq24@mails.tsinghua.edu.cn}{fanjq24@mails.tsinghua.edu.cn}}
	\thanks{$^\dagger$Department of Mathematics, University of California, Irvine, \href{mailto:zouguangyi2001@gmail.com}{zouguangyi2001@gmail.com}}

	\begin{abstract}
		Consider an $N\times N$ random band matrix $H$ with bandwidth $W$ with centered real Gaussian entries. We prove that, under the (almost) sharp assumption $W^2\ll N/\log N$, the bulk local statistics of $H$ are asymptotically a Poisson point process. Combined with the bulk universality results from \cite{Band1D} and \cite{erdos2025zigzagstrategyrandomband}, this establishes the Poisson-RMT transition of the local statistics of random band matrices, and complements the results about the localization-delocalization transition of the bulk eigenvectors previously established in \cite{Band1D,erdos2025zigzagstrategyrandomband}, and \cite{Localization1_2}. The key technical inputs are: (1) the fractional moment estimates from \cite{Localization1_2}; (2) a novel estimate for the density of states, obtained from the local laws in \cite{Band1D} and \cite{erdos2025zigzagstrategyrandomband}.

	\end{abstract}
	\maketitle
	
	\tableofcontents

    \section{Introduction}\label{sec_introduction}

    The study of random band matrices dates back to the numerical work of Casati, Molinari and Izrailev \cite{scalingabndCGMLIF1990PRL} and the theoretical analysis of Fyodorov and Mirlin \cite{ScalingPropertyBandMatrixFYMA1991PRL}. These models have received much attention in both physics and mathematics because of their close connections to the Anderson model and quantum chaos. Roughly speaking, random band matrices are random matrices whose entries vanish outside a band around the diagonal. The bandwidth measures the range of interaction and provides a way to pass from models with short-range interactions to mean-field models such as Wigner matrices.

    For an $N\times N$ random band matrix in one dimension with bandwidth $W$, physical arguments predict that the bulk localization length is of order $W^2$, until it reaches the system size $N$. Thus, a transition is expected near $W\sim\sqrt N$: when $W\ll\sqrt N$, eigenvectors are localized and local eigenvalue statistics are Poisson; when $W\gg\sqrt N$, eigenvectors are delocalized and local statistics agree with those of the Gaussian orthogonal or unitary ensemble, according to the symmetry class. Numerical studies of band matrices and related quantum-chaotic systems gave early evidence for this picture \cite{ConJ-Ref2,scalingabndCGMLIF1990PRL,PhysRevLett.66.986}. Fyodorov and Mirlin \cite{ScalingPropertyBandMatrixFYMA1991PRL} derived the scaling parameter $W^2/N$ through a supersymmetric sigma-model calculation. The change in eigenvalue statistics is often called the Poisson--random matrix theory (Poisson--RMT) transition.

    The connection with the Anderson model is particularly useful. Introduced in the seminal work \cite{Anderson1958Absence} of Anderson, this model consists of a lattice Laplacian and an independent random potential. Scaling theory predicts localization in dimensions one and two, and a transition between localized and extended states in dimensions at least three \cite{PRL_Anderson}. Localization was proved in one dimension (see, e.g., \cite{JIMSPL1977FAIA,cmp/1103908590}), and at large disorder or near spectral edges in arbitrary dimensions through multiscale analysis and the fractional moment method \cite{frohlich1983absence,aizenman1993localization}. For local eigenvalue statistics, Molchanov proved a Poisson limit for a one-dimensional continuum model \cite{Molchanov1981LocalStructure}, and Minami established the corresponding result for the multidimensional discrete Anderson model under localization assumptions \cite{Minami1996Localfluctuation}. Further results were obtained in \cite{GerminetKloop2014Spectralstatistics}. In contrast, the existence of extended states at weak disorder for the standard Anderson model on $\mathbb Z^d$, $d\ge3$, remains open.

    We first review the rigorous results for one-dimensional random band matrices. On the delocalized side, partial results were obtained in \cite{erdHos2011quantum1,erdHos2011quantum,Average_fluc,delocal,HeMa2018,BaoErd2015,bourgade2017universality,bourgade2020random,bourgade2019random,Band1D_III,DY} over the past twenty years. More recently, Yau and Yin reached the sharp threshold $W\ge N^{1/2+\varepsilon}$ \cite{Band1D}, and Erd\H{o}s and Riabov extended the result to general entry distributions and variance profiles in both symmetry classes \cite{erdos2025zigzagstrategyrandomband}. A different extension to general profiles was developed by Fan, Yang and Yin through a block reduction method \cite{fan2025blockreductionmethodrandom}.

    On the localized side, the first quantitative eigenvector result was due to Schenker \cite{Sch2009}, who proved localization for Gaussian band matrices when $W\ll N^{1/8}$. Peled, Schenker, Shamis and Sodin improved this range to $W\ll N^{1/7}$ in \cite{Wegnerorbital2019RJMS}. Later, Chen and Smart \cite{Chen2022} and Cipolloni, Peled, Schenker and Shapiro \cite{Cipolloni2024} independently reached $W\ll N^{1/4}$. Drogin finally obtained the predicted $W^2$ decay scale for a broad class of random band matrices \cite{Localization1_2}.

    The eigenvalue problem in the localized regime is less complete. Brodie and Hislop proved convergence to a Poisson point process for fixed bandwidth, together with regularity and convergence of the density of states \cite{BrodieHislop2022}. For growing bandwidth, Hislop and Krishna obtained nontrivial Poisson subsequential limits for local eigenvalue counts in restricted bandwidth ranges, including $W=N^\alpha$ with $0<\alpha<1/7$ at almost every bulk energy \cite{HislopKrishna2022}. Supersymmetric methods also give precise results for special Gaussian models: Shcherbina and Shcherbina studied characteristic-polynomial correlations below the square-root scale \cite{ShcherbinaShcherbina2017} and proved GUE limits for the two-point correlation function above that scale, up to logarithmic factors \cite{ShcherbinaShcherbina2021}. At the spectral edge, Sodin \cite{Sodin2010Spectral} identified a different transition scale, $W\sim N^{5/6}$, and Liu and Zou extended the analysis to the critical regime and higher dimensions \cite{liu2025edgestatisticsrandomband,liu2025edgeuniversalityinhomogeneousrandom,liu2026edgeuniversalityinhomogeneousrandom}. Thus, although the bulk eigenvector transition is now understood at the predicted power scale, the full bulk Poisson--RMT transition has not been established under optimal conditions for random band matrices.

    In higher dimensions, Yang, Yau and Yin developed self-energy renormalization and $T$-expansions to prove delocalization in dimensions $d\geq 8$ \cite{yang2021delocalization,YYYTexpansion2022CMP}. Later, this was extended to dimensions $d\ge7$ by Xu, Yang, Yau and Yin \cite{xu2024bulk}. Recent work establishes delocalization and bulk universality for polynomially growing bandwidth in dimension two \cite{Band2D} and in every dimension $d\ge3$ \cite{dubova2025delocalizationnonmeanfieldrandommatrices}, extending the dynamical approach developed originally for one-dimensional RBMs in \cite{Band1D}.

    Random block Schr\"odinger operators provide another connection between the Anderson model and mean-field models. In the block Anderson model, the potential at each site is a random matrix and neighboring sites are joined by deterministic identity blocks. The block size measures the number of internal degrees of freedom. Strong-disorder localization and eigenvalue counting estimates for Gaussian orbital models were proved in \cite{Wegnerorbital2019RJMS}. Yang and Yin obtained delocalization results for a general class of block Schr\"odinger operators in high dimensions \cite{yang2025delocalizationgeneralclassrandom}, while Truong, Yang and Yin proved delocalization in dimension two and lower bounds on localization lengths in dimension one \cite{truong2025localizationlengthfinitevolumerandom}. For the mean-field block Anderson models, the complete transitions for both eigenvalues and eigenvectors throughout the whole spectrum are proved in \cite{stone2025random,fan2025localizationdelocalizationtransitionrandomblock}. More recently, Drogin proved sharp Lyapunov-exponent bounds for the one-dimensional GOE block Anderson model \cite{Drogin2026Furstenberg}.

    In this paper, we study the local eigenvalue statistics for the real Gaussian block band matrices defined in \Cref{def_considered_model} below. We prove convergence of the local eigenvalue process at every fixed bulk energy to a unit-intensity Poisson point process, after unfolding by the limiting density of states, whenever $W^2\log N/N\to0$; see \Cref{theorem_Poisson_process} below. As a byproduct, we also establish convergence and regularity properties of the density of states; see \Cref{lemma_rho_W_N} below. This gives the predicted localized eigenvalue statistics up to a logarithmic factor and, together with the results in the delocalized phase, establishes the conjectured transition in local statistics.

    \subsection{Main ideas}\label{sec_main_ideas}
    In the proof of the asymptotic Poisson point process (\Cref{theorem_Poisson_process}), we follow the classical framework developed in Minami's seminal work \cite{Minami1996Localfluctuation}. Roughly speaking, for any random matrix model $H$, this framework allows us to obtain the asymptotic Poisson point process as long as we have (1) \emph{fractional moment estimates} of the resolvent entries (\Cref{lemma_fractional_moments}), (2) sufficient \emph{control of the density of states} (\Cref{lemma_rho_W_N}), by the following two steps:
    \begin{enumerate}
        \item partitioning the base space $\ZN$ into smaller boxes, and proving the local statistics of the modified matrix $\wh H$, obtained by removing the hopping terms between these boxes, are asymptotically a Poisson point process;
        \item applying resolvent comparison to show that the local statistics of $H$ and $\wh H$ are asymptotically the same.
    \end{enumerate}
    Here, the first step uses the estimates of the density of states together with the standard Wegner-Minami estimates (\Cref{lemma_Wegner_Minami_estimates}), whereas the second relies mainly on the fractional moment estimates. While the Wegner-Minami and fractional moment estimates have been established in \cite{Wegnerorbital2019RJMS} and \cite{Localization1_2}, the primary novelty of this work is that we introduce the \emph{local law} of random band matrices in the delocalized phase from \cite{Band1D,erdos2025zigzagstrategyrandomband} to control the density of states.

    \medskip

    \paragraph{\bf Organization of the remaining text.} The remainder of the paper is organized as follows. In \Cref{sec_model_and_main_results}, we define the model and state the main result. \Cref{sec_preliminaries} collects the technical inputs for the proof of the main result. More specifically, we state and prove some standard estimates in \Cref{sec_a_prioro_estimates}, and establish the control of the density of states in \Cref{sec_density_of_states}. The proof in \Cref{sec_density_of_states} relies on two comparison lemmas, whose proofs are similar and hence postponed to \Cref{sec_proof_of_lemma_comparison}. With these preparations, we present the proof of the main result in \Cref{sec_proof_of_main_result}. Finally, we collect some open problems related to this work in \Cref{sec_open_problems}.

    \subsection*{Acknowledgement}
     We would like to thank Fan Yang for helpful comments. Large language models were used in a role analogous to that of a collaborator. J. F. is supported by NSFC (No.~12526201). G. Z. is supported by NSF Grant DMS 2451011 and U.S. Air Force Grant FA9550-25-1-0294.

    \subsection{Model and main results}\label{sec_model_and_main_results}
    In this work, as in \cite{Localization1_2} and \cite{Band1D}, we consider one-dimensional random band matrices with a block variance profile, defined as follows.
    \begin{definition}[Block random band matrices]\label{def_considered_model}
        Given any $W\in\Z_+$ and $N=nW$ with $n\in\Z_+$, we define the block random band matrix $H\equiv H_{W,N}$ with bandwidth $W$ on the circular lattice $\ZN:=\Z/N\Z$ as the real symmetric random matrix with independent centered Gaussian upper-triangular entries satisfying
        \begin{equation}
            \E H_{xy}^2=\frac{\mathbf{1}\pa{|\qa{x}-\qa{y}|\leq 1}}{3W},\qquad \forall x,y\in\ZN.
        \end{equation}
        Here, $\qa{x}$ and $\qa{y}$ are respectively the equivalence classes of $x$ and $y$ in the block lattice $\Zn:=\ZN/\sim_{\txt{block}}$, defined as the quotient lattice by the equivalence relation $\sim_{\txt{block}}$, and $|\qa{x}-\qa{y}|$ denotes the graph distance on $\Zn$. Specifically, given any $x,y\in\ZN$, we say $x\sim_{\txt{block}} y$, if and only if, for some $i,j\in\ha{0,1,\ldots,W-1}$ and $\ell\in\ha{0,1,\ldots,n-1}$,
        \begin{equation}
              x=\ell W+i \qquad \txt{and} \qquad y=\ell W+j.
        \end{equation}
        With a slight abuse of notation, we will also regard $\qa{x}\in\Zn$ as the block containing $x$. 
        For ease of presentation, we assume $n\geq 100$ to avoid the discussion of the boundary case, whenever considering the model defined here.\footnote{Clearly, under the assumption \eqref{eq_localization_assumption} below, we have $n\to \infty$. Moreover, when $n< 100$, the matrix $H$ is simply a mean-field Wigner-type matrix; see, e.g., \cite{ajanki2017universality} for the universality results.} 
        Moreover, given a block random band matrix $H$ defined above, we denote by $H^{\txt{o}}\equiv H^{\txt{o}}_{W,N}$ the matrix obtained from $H$ by removing the lower-left and upper-right corner blocks, $H_{\qa{0}\qa{N-1}}$ and $H_{\qa{N-1}\qa{0}}$. The matrix $H^{\txt{o}}$ will be referred to as the \emph{open-boundary form} of $H$, while the original matrix $H$ will be referred to as the \emph{periodic form}.
    \end{definition}

In block form, $H$ and its open-boundary form $H^{\mathrm{o}}$ are given by
\[
\begin{aligned}
H&=
\begin{pmatrix}
A_0 & B_0 & 0 & \cdots & B_{n-1}^{\mathsf T} \\
B_0^{\mathsf T} & A_1 & B_1 & \ddots & 0 \\
0 & B_1^{\mathsf T} & A_2 & \ddots & \vdots \\
\vdots & \ddots & \ddots & \ddots & B_{n-2} \\
B_{n-1} & 0 & \cdots & B_{n-2}^{\mathsf T} & A_{n-1}
\end{pmatrix},
\qquad H^{\mathrm{o}}=
\begin{pmatrix}
A_0 & B_0 & 0 & \cdots & 0 \\
B_0^{\mathsf T} & A_1 & B_1 & \ddots & \vdots \\
0 & B_1^{\mathsf T} & A_2 & \ddots & 0 \\
\vdots & \ddots & \ddots & \ddots & B_{n-2} \\
0 & \cdots & 0 & B_{n-2}^{\mathsf T} & A_{n-1}
\end{pmatrix}.
\end{aligned}
\]
Here all displayed blocks are $W\times W$, and $A_j=A_j^{\mathsf T}$.
The blocks $A_0,\ldots,A_{n-1},B_0,\ldots,B_{n-1}$ are independent.
The upper-triangular entries of each $A_j$ and all entries of each $B_j$
are independent Gaussian random variables with second moment $1/(3W)$.
Thus $H$ is a cyclic block tridiagonal matrix, and $H^{\mathrm{o}}$ is
obtained by setting the two corner blocks $B_{n-1}^{\mathsf T}$ and
$B_{n-1}$ to zero.

    \begin{remark}
        The block structures of the lattice and the matrix model are assumed for technical convenience. In fact, the method in this work is robust enough and extends to much more general models, e.g., the one-dimensional random band matrices with cutoff profile or complex Hermitian symmetry, as well as their open-boundary forms. We will address these extensions in the future versions of this work.
    \end{remark}

    Given any random band matrix $H$ defined in \Cref{def_considered_model} on the lattice $\ZN$, we define the \emph{resolvent} (also referred to as \emph{Green's function}) of $H$ at spectral parameter $z\in\C$ as
    \begin{equation}
        G(z)\equiv G_{W,N}(z):=\p{H_{W,N}-z}^{-1}, \qquad G^{\txt{o}}(z)\equiv G^{\txt{o}}_{W,N}(z):=\p{H^{\txt{o}}_{W,N}-z}^{-1}.
    \end{equation}
    Clearly, for every fixed $z\in\C$, the resolvent at $z$ is well-defined almost surely. Therefore, it makes sense to regard the resolvents as random matrices. Moreover, the Wegner estimate in \eqref{eq_Wegner_Minami_estimate} below shows the expectation of the empirical spectral measure is absolutely continuous with respect to the Lebesgue measure, and a smooth version of the Radon-Nikodym derivative, denoted as $\rho_{W,N}$, exists; see \Cref{lemma_rho_W_N_regularity} below. Specifically, $\rho_{W,N}$ is uniquely determined (up to Lebesgue null sets) by the following identity:
    \begin{equation}\label{def_rho_W_N}
        \frac{1}{N}\E[\Tr\,f(H_{W,N})]=\int_{\R}f(x)\rho_{W,N}(x)\, \rd x,\qquad \forall f\in C_{c}(\R).
    \end{equation}
    Clearly, by the definition of the Radon-Nikodym derivative, \eqref{def_rho_W_N} remains valid for every bounded Lebesgue measurable function $f$. 
    Furthermore, as illustrated in \Cref{lemma_rho_W_N} below, for every fixed $W\in\Z_+$, the expected spectral density $\rho_{W,N}$ converges locally uniformly to a continuous strictly positive probability density $\rho_{W}$ as $N\to \infty$. If we further let $W\to \infty$, the estimate \eqref{eq_rho_W_to_rho_sc} below shows that the limit density $\rho_{W}$ converges to the following Wigner semicircle law:
    \begin{equation}
        \rho_{\txt{sc}}(x):=\frac{1}{2\pi}\sqrt{(4-x^2)_+},
    \end{equation}
    whose Stieltjes transform will be referred to as $m_{\txt{sc}}$, defined by
    \begin{equation}
        m_{\txt{sc}}(z):=\int_{\R}\frac{\rho_{\txt{sc}}(u)}{u-z}\, \rd u=\frac{-z+\sqrt{z^2-4}}{2},\qquad \forall z\in\C\setminus[-2,2].
    \end{equation}
    Here, the branch is chosen so that $m_{\txt{sc}}(z)\sim -1/z$ at infinity. 
    Compared to the open form $H^{\mathrm{o}}$, the symmetries of the periodic form $H$ imply that, for every bounded measurable function $f$ and every $x\in\ZN$,
    \begin{equation}\label{eq_xx_to_1_N_Tr}
        \E f(H)_{xx}=\frac{1}{N}\E \Tr\, f(H).
    \end{equation}

    To state the main result of this work, given any fixed energy $E\in\R$, we denote the eigenvalues of $H_{W,N}$ as $\ha{\lambda_{k}}_{k=1}^{N}\equiv\ha{\lambda_{k}(H_{W,N})}_{k=1}^{N}$, and arrange them in the non-decreasing order $\lambda_1\leq \cdots \leq \lambda_{N}$.
    \begin{theorem}[Asymptotic Poisson point process]\label{theorem_Poisson_process}
        Suppose $H\equiv H_{W,N}$ is a random band matrix with bandwidth $W\geq 1$ and $N=nW$, defined in \Cref{def_considered_model}. Under the assumption
        \begin{equation}\label{eq_localization_assumption}
            W^2\log N/N\to 0,
        \end{equation}
        for every fixed bulk energy $E\in(-2,2)$, as $N\to \infty$, the following random measure converges in distribution to the Poisson point process $\txt{PPP}\pa{\rd \lambda}$ in the space of locally finite measures equipped with the vague topology:
        \begin{equation}\label{def_xi_E_W_N}
            \xi_{E}\equiv\xi_{E,W,N}:=\sum_{k=1}^{N}\delta_{N\rho_W(E)(\lambda_k-E)}.
        \end{equation}
        Here, $\txt{PPP}(\mu)$ denotes the Poisson point process on $\R$ with intensity measure $\mu$, and $\rd \lambda$ is the Lebesgue measure on $\R$. Equivalently, we have, for every $f\in C_{c}(\R)$ with $f\geq 0$,
        \begin{equation}\label{eq_xi_to_PPP_d_lambda}
            \lim_{N\to\infty} \E\exp\pa{-\int_{\R}f\, \rd \xi_{E}}=\exp\pa{-\int_{\R}\qa{1-\exp\pa{-f(\lambda)}}\, \rd \lambda}.
        \end{equation}
        If we further assume both $W,N\to \infty$, the convergence becomes
        \begin{equation}\label{eq_convergence_to_rho_sc_E_d_lambda}
            \sum_{k=1}^{N}\delta_{N(\lambda_k-E)}\implies \txt{PPP}\pa{\rho_{\txt{sc}}(E)\,\rd \lambda}.
        \end{equation}
        Here, the notation $\implies$ denotes convergence in distribution with respect to the vague topology. Moreover, the convergence also holds in the sense of $k$-point correlation functions of $\xi_{E}$, i.e., for every fixed $k\in\Z_+$ and every $F\in C_{c}(\R^k)$, we have
        \begin{equation}
            \lim_{N\to \infty}\frac{1}{[N\rho_W(E)]^k} \int_{\R^k} F(\blambda)\cdot\mathbf{r}^{(k)}_{N}\pa{E+\frac{\lambda_1}{N\rho_W(E)},\cdots,E+\frac{\lambda_k}{N\rho_W(E)}}\, \rd \blambda=\int_{\R^k} F(\blambda)\, \rd \blambda.
        \end{equation}
        Here, $\blambda$ denotes $\blambda=(\lambda_1,\ldots,\lambda_k)$, and $\mathbf{r}_{N}^{(k)}$ is the $k$-point correlation function (for existence, see, e.g., \cite[Theorem 3.10]{EvansGariepy2015}), defined uniquely up to Lebesgue null sets by the identity
        \begin{equation}
            \sum_{1\leq i_1,\ldots,i_{k}\leq N}^{\neq}\E F(\lambda_{i_1},\ldots,\lambda_{i_k})=\int_{\R^k}F(\blambda)\cdot\mathbf{r}_{N}^{(k)}(\blambda)\, \rd \blambda,\qquad \forall F\in C_{c}(\R^k),
\end{equation}
        where the sum on the left-hand side is taken over all pairwise distinct indices $i_1,\ldots,i_k$.

\end{theorem}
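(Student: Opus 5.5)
We follow Minami's framework as outlined in \Cref{sec_main_ideas}. The inputs we take as given are:
\begin{enumerate}
    \item the fractional moment bound \Cref{lemma_fractional_moments}, i.e.\ $\E|G_{xy}(z)|^s\lesssim W^{-s}e^{-c|[x]-[y]|/W}$ uniformly for $z$ near $E$, with localization length $\ell\sim W^2$ in lattice units;
    \item the Wegner--Minami estimates \Cref{lemma_Wegner_Minami_estimates};
    \item the density of states control \Cref{lemma_rho_W_N}, which gives $\rho_{W,N}\to\rho_W$ locally uniformly, with regularity and $\rho_W\to\rho_{\txt{sc}}$.
\end{enumerate}
By a standard reduction it suffices to prove \eqref{eq_xi_to_PPP_d_lambda} for test functions of the form $f=$ linear combinations of $\operatorname{Im}$ of Cauchy kernels, i.e.\ convergence of $\int \operatorname{Im}\frac{1}{N\rho_W(E)(\lambda-E)-z}\,\rd\xi_E$ jointly for finitely many $z\in\C_+$. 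Equivalently, in terms of the resolvent, it suffices to handle $\frac{1}{N\rho_W(E)}\operatorname{Im}\Tr G(E+z/(N\rho_W(E)))$.

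\textbf{Step 1: Decoupling.} Choose a block length $L$ (in units of $W$-blocks) with $LW\gg W^2\log N$ but $LW\ll N$. This is possible exactly by \eqref{eq_localization_assumption}; take $L W = \sqrt{N\cdot W^2\log N}$, say. Partition $\Zn$ into $n/L$ consecutive intervals $\Lambda_j$. Let $\wh H=\bigoplus_j H_{\Lambda_j}$ be obtained by deleting the off-diagonal blocks $B$ connecting neighboring intervals. The $H_{\Lambda_j}$ are i.i.d.\ open-boundary band matrices of size $M=LW$. We then need to show that
\begin{equation*}
    \frac{1}{N}\sum_x \E\bigl|G_{xx}(E+\eta\,\cdot)-\wh G_{xx}(E+\eta\,\cdot)\bigr|\to0
\end{equation*}
at the scale $\eta=1/N$. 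We use the resolvent identity $G-\wh G=-G(H-\wh H)\wh G$. Only sites within distance $\sim W^2\log N$ of the $n/L$ cut points contribute non-negligibly; farther sites are killed by exponential decay in fractional moments, via the usual Aizenman--Molchanov argument converting $\E|G_{xy}|^s$ decay into an averaged bound on $\operatorname{Im}$ via the spectral averaging trick. The total fraction of boundary sites is $(n/L)W^2\log N/N=W^2\log N/(LW)\to0$. Thus $\xi_E$ and $\wh\xi_E$ have asymptotically equal Laplace functionals. This is the standard Minami step (cf.\ \cite{Minami1996Localfluctuation}), and requires the fractional moment bound to hold uniformly down to $\eta=N^{-1}$ (or even $\eta\downarrow0$), which \Cref{lemma_fractional_moments} provides.

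\textbf{Step 2: Poisson limit for the i.i.d.\ array.} The process $\wh\xi_E=\sum_j\xi^{(j)}$ is a superposition of independent point processes. By the standard criterion for null arrays (e.g.\ Kallenberg), convergence to $\txt{PPP}(\rd\lambda)$ follows from two conditions, for each bounded interval $I$:
\begin{enumerate}
    \item $\max_j\P(\xi^{(j)}(I)\ge1)\to0$ and $\sum_j\P(\xi^{(j)}(I)\ge2)\to0$. These follow from the Wegner and Minami estimates respectively: the relevant quantities are bounded by $M|I|/N\to0$ and $(n/L)(M|I|/N)^2\to0$.
    \item $\sum_j\P(\xi^{(j)}(I)\ge1)\to|I|$. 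Given the first condition, this reduces to $\sum_j\E\xi^{(j)}(I)\to|I|$, i.e.
    \begin{equation*}
        \frac{N}{M}\int_{E+I/(N\rho_W)}\rho^{\txt{o}}_{W,M}=\frac{N}{M}\cdot\frac{M}{N\rho_W(E)}\int_I\rho^{\txt{o}}_{W,M}\pa{E+\tfrac{\cdot}{N\rho_W}}.
    \end{equation*}
\end{enumerate}
Hence we need $\rho^{\txt{o}}_{W,M}\to\rho_W(E)$ uniformly near $E$. Here $\rho^{\txt{o}}_{W,M}$ denotes the expected density of the open-boundary matrix on a box of size $M$.

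\textbf{Step 3 (main obstacle): uniform density of states on the box.} The open box density differs from the periodic density $\rho_{W,M}$ only through sites near the two ends; this boundary effect is again of relative size $O(W^2\log N/M)$ by the fractional moment comparison. So we need $\rho_{W,M}(E')\to\rho_W(E)$ uniformly in $|E'-E|\lesssim1/N$ and along the joint limit of $(W,M)$ with $W$ possibly growing. This is precisely what \Cref{lemma_rho_W_N} is designed to give. Note that Wegner alone only bounds the density, and smoothness is only pointwise in $W$. If $W$ stays bounded along a subsequence, we use convergence $\rho_{W,M}\to\rho_W$ together with continuity of $\rho_W$. If $W\to\infty$, we use the local-law input, comparing with the delocalized regime via \cite{Band1D,erdos2025zigzagstrategyrandomband}, to get $\rho_{W,M}\to\rho_{\txt{sc}}$ uniformly on small scales. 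This yields both \eqref{eq_xi_to_PPP_d_lambda} and, after rescaling using $\rho_W(E)\to\rho_{\txt{sc}}(E)$, \eqref{eq_convergence_to_rho_sc_E_d_lambda}. The delicate point is making this uniform on the scale $1/N$, which I expect to require the regularity part of \Cref{lemma_rho_W_N_regularity}.

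\textbf{Step 4: Correlation functions.} Convergence of the $k$-point correlation functions follows from the convergence in distribution together with uniform integrability of factorial moments of $\xi_E(I)$. This uniform integrability comes from the higher-order Minami bound $\E\binom{\xi_E(I)}{k}\le C_k|I|^k$ from \Cref{lemma_Wegner_Minami_estimates}, applied with $k+1$ in place of $k$.
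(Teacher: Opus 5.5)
\textbf{The gap: per-site control at the ends of open boxes.} Your architecture is the paper's: boxes of length $L=W\sqrt{N\log N}$, and a boundary layer of width $\ell\asymp AW^2\log N$. Sites outside that layer are handled by the crude bounds $|G_{xa}|\le\eta^{s-1}|G_{xa}|^s$ and $\|\wh G\|\le\eta^{-1}$, whose polynomial loss $\eta^{-2}\sim N^2$ is absorbed by $e^{-c\ell/W^2}=N^{-cA}$. No spectral averaging is needed, and the true prefactor $CW^C$ in the fractional-moment bound, rather than your $W^{-s}$, is harmless. The Poisson limit for the independent boxes uses Wegner--Minami plus \Cref{lemma_rho_W_N}. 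Your uniform-integrability argument for correlation functions is a valid substitute for the appeal to Zessin's theorem.

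The problem comes from your choice of \emph{open-boundary} boxes. In Step 1 you dispose of the boundary layer by treating each such site as contributing $O(1)$, i.e.\ you bound $\sum_{x\in\mathsf B_\ell}\bigl(\E\,\mathrm{Im}\,G_{xx}+\E\,\mathrm{Im}\,\wh G_{xx}\bigr)$ by $O(|\mathsf B_\ell|)$. Step 3 silently uses the same per-site control when you say the open-box density differs from $\rho_{W,M}$ ``only through sites near the two ends.'' For the periodic $G$ this control is \eqref{eq_E_Im_G_xx_bound}, which rests on translation invariance \eqref{eq_xx_to_1_N_Tr}. An open box has no such invariance, and \Cref{lemma_Wegner_Minami_estimates} controls only the full trace, not the spectral mass near the ends.

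The only per-site bound available from the stated inputs comes from averaging over $H_{xx}$ in the Schur complement: $\E\,\mathrm{Im}\,\wh G_{xx}\le\pi\|p_W\|_\infty\asymp\sqrt W$. With this, the boundary layer contributes about $\sqrt W\cdot W^2\log N/L\asymp\sqrt{W^3\log N/N}$ to $N^{-1}\sum_x$. That does not vanish under \eqref{eq_localization_assumption}; take $W\approx N^{0.45}$. Re-tuning $L$ only recovers the range $W^{5/2}\log N\ll N$, not the sharp one. To close this along your route you would need a block-local Wegner bound $\sum_{y\in\qa{x}}\E\,\mathrm{Im}\,(H^{\mathrm o}_{\Lambda}-z)^{-1}_{yy}\le CW$ uniformly near the ends. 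That has to be extracted from the GOE-block structure, not from the lemma as stated.

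\textbf{How the paper avoids it.} The paper completes each box $I_k$ to its \emph{periodic} form $H_{I_k}$, using independent copies of the corner blocks. Then $\Delta=H-\wh H$ is still supported in $\mathsf B\times\mathsf B$ with $\E|\Delta_{ab}|^2\le CW^{-1}$, so the far-site comparison is unchanged. At the same time \eqref{eq_E_Im_G_xx_bound} now applies to $\wh G$ as well, so boundary-layer sites cost $O(1)$ each. Moreover, the intensity of $\wh\xi_E$ is exactly $\sum_k\frac{N_k}{N}\rho_{W,N_k}(E+\lambda/N\rho)/\rho$. This tends to $1$ locally uniformly by \eqref{eq_rho_W_N_rho_W}, because $N_k\ge L\gg W^2\log N$ beats the $(NW)^C$ prefactor, together with the equicontinuity of $\{\rho_W\}$. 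Your Step 3 therefore disappears entirely. The uniformity at scale $1/N$ that you flagged as delicate is then immediate, with no need for the regularity lemma or any local-law argument beyond what is already inside \Cref{lemma_rho_W_N}.
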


    \section{Preliminaries}\label{sec_preliminaries}

    \subsection{A priori estimates}\label{sec_a_prioro_estimates}
    As discussed in \Cref{sec_main_ideas}, the proof of \Cref{theorem_Poisson_process} combines the fractional moment estimates from \cite{Localization1_2} with control of the density of states obtained by the recent local law from \cite{Band1D} and \cite{erdos2025zigzagstrategyrandomband}, then employs the framework in which localization of eigenvectors and control of the density of states lead to Poisson local statistics. For ease of reference, in this section, we collect below the a priori estimates that will be used in subsequent sections. The estimates in this subsection do not require assumption \eqref{eq_localization_assumption}. We begin by recalling the Wegner and Minami estimates from \cite[Theorem 2 and 3]{Wegnerorbital2019RJMS}. For any symmetric matrix $H$ and measurable subset $I\subseteq \R$, define the eigenvalue counting function by
    \begin{equation}
        \cN_I(H):=\Tr\,\mathbf{1}_I(H).
    \end{equation}
    \begin{lemma}[Wegner and Minami estimates]\label{lemma_Wegner_Minami_estimates}Suppose $H\equiv H_{W,N}$ is a random band matrix with bandwidth $W\geq 1$ and $N=nW$, defined in \Cref{def_considered_model}. There exists a large constant $C\geq 1$, such that the following estimates hold uniformly over $N,W$, intervals $I\subseteq \R$, and integers $k\geq 1$:
        \begin{equation}\label{eq_Wegner_Minami_estimate}
            \E \cN_I(H)(\cN_I(H)-1)\cdots (\cN_I(H)-k+1)\leq (CN|I|)^k.
        \end{equation}
        Moreover, for any deterministic real symmetric matrix $A\in\R^{\ZN\times\ZN}$, with the same constant $C$, the bound \eqref{eq_Wegner_Minami_estimate} remains valid with $H$ replaced by any of $\ha{H^{\mathrm{o}},H+A,H^{\mathrm{o}}+A}$.
    \end{lemma}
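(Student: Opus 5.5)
The plan is to derive the Wegner and Minami bounds from conditional averaging over a single Gaussian direction, in the spirit of \cite{Wegnerorbital2019RJMS}: we exploit the fact that every diagonal block $A_j$ is an independent GOE-type matrix. For each fixed $j$ we decompose $A_j = a_j I_W + \wt A_j$, where $a_j := W^{-1}\Tr A_j$ is the normalized trace and $\wt A_j$ is the traceless part. For Gaussian entries, $a_j$ is a centered Gaussian of variance of order $1/W^2$ (the diagonal entries of $A_j$ have variance $1/(3W)$), and it is independent of $\wt A_j$ and of all remaining blocks. Write $P_j$ for the orthogonal projection onto the coordinates of block $j$. Conditionally on everything except $a_j$, the matrix is
\[
H = H_j' + a_j P_j ,
\]
which is a rank-$W$ monotone perturbation in the scalar $a_j$. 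The same decomposition applies verbatim to $H^{\mathrm o}$, $H+A$ and $H^{\mathrm o}+A$: a deterministic $A$ is simply absorbed into $H_j'$. This is why the constant $C$ does not depend on $A$.

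\textbf{Wegner ($k=1$).} We use the spectral averaging bound for monotone perturbations. Since $\sum_j P_j = I$, we have $\cN_I(H)=\sum_j \Tr P_j\mathbf 1_I(H)$, and by the chain rule $\frac{d}{da_j}\Tr F(H) = \Tr P_j F'(H)$. Take $F$ to be the antiderivative of $\mathbf 1_I$ (smoothed if needed). Then
\[
\E\,\Tr P_j\mathbf 1_I(H) = \E\Bigl[\int \partial_{a}\Tr F(H_j'+aP_j)\,\gamma_j(a)\,\rd a\Bigr].
\]
Integrating by parts against the Gaussian density $\gamma_j$ moves the derivative onto $\gamma_j$, whose $L^1$ norm of $\gamma_j'$ is of order $W$. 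The remaining factor $\Tr F(H_j'+aP_j)-\Tr F(H_j')$ is bounded by $W|I|$: $F$ is Lipschitz with slope at most $1$ and varies by at most $|I|$, the perturbation has rank $W$, and interlacing controls the change. This gives only $W^2|I|$ per block, which is too large. A sharper route is the exact formula: the eigenvalues of $H_j'+aP_j$ are nondecreasing in $a$, and each eigenvalue crosses $I$ over an $a$-interval of length at least $|I|$ in total measure. Combined with $\|\gamma_j\|_\infty\lesssim W$, this gives $\E\Tr P_j\mathbf 1_I(H)\lesssim W\cdot|I|\cdot(\text{number of eigenvalues moving})$. More cleanly, we use the identity
\[
\E\,\cN_I(H) = \E\int \gamma_j(a)\sum_{\ell}\mathbf 1\bigl(\lambda_\ell(a)\in I\bigr)\,\rd a
\]
together with $\sum_\ell \lambda_\ell'(a)=W$ and $\lambda'_\ell\in[0,1]$. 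Summing over all $j$ simultaneously, i.e. shifting by the global scalar $\bar a=\frac1n\sum_j a_j$, whose derivative acts as the identity so that $\lambda_\ell'=1$, we get $\E\cN_I\le N\|\gamma_{\bar a}\|_\infty|I|$. Here $\bar a$ has variance of order $1/(nW^2)$, so $\|\gamma_{\bar a}\|_\infty\sim W\sqrt n$, which is also too large. The first step is therefore to identify the correct smoothing variable, and we take it from \cite{Wegnerorbital2019RJMS}: average over the \emph{whole} GOE block $A_j$ via the GOE Wegner estimate applied to the Schur complement. By Schur complement, the eigenvalues of $H$ in $I$ seen through block $j$ are governed by $A_j - \Sigma_j(E)$, where $\Sigma_j$ is independent of $A_j$ and symmetric, and the GOE Wegner estimate for $A_j+\text{deterministic}$ (the density bound of order $W$ uniformly over shifts) yields $\E\Tr P_j\mathbf 1_I(H)\le CW|I|$. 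Summing over the $n$ blocks gives $CN|I|$.

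\textbf{Minami ($k\ge2$).} We proceed by induction, following the Combes--Germinet--Klein approach. We write the factorial moment as
\[
\E\sum^{\neq}_{\ell_1,\dots,\ell_k}\prod_{i}\mathbf 1\bigl(\lambda_{\ell_i}\in I\bigr),
\]
expand one factor through the block decomposition as above, and condition on the block $j$ carrying that eigenvalue. After the block-$j$ averaging (rank-$W$ perturbation), the remaining $k-1$ eigenvalues in $I$ are interlaced with those of a matrix independent of $A_j$, up to a shift of at most $W$. Using a GOE Minami-type bound for the block gives the factor $CW|I|$ times an $(k-1)$-fold factorial moment of order $(CN|I|)^{k-1}$, which closes the induction.

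The main obstacle is this last decoupling: making the block averaging independent of the remaining eigenvalue count while losing only a factor $C$ rather than $W$, since interlacing loses up to $W$ eigenvalues. Here we would rely on the joint eigenvalue repulsion within a GOE block, in the form of a Minami estimate for GOE plus a deterministic shift, precisely as done in \cite[Theorem 3]{Wegnerorbital2019RJMS}.
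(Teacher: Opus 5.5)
You end up where the paper does: both arguments reduce to Theorems 2 and 3 of \cite{Wegnerorbital2019RJMS}. But you skip the one reduction the paper's proof actually performs, and the claim you put in its place is not correct as stated. You treat each diagonal block $A_j$ as a GOE matrix (``GOE-type'', ``the GOE Wegner estimate for $A_j$ + deterministic''). It is not one. In \Cref{def_considered_model} the diagonal entries of $A_j$ have variance $1/(3W)$, the same as the off-diagonal ones, whereas GOE has diagonal variance twice the off-diagonal variance. So the law of $A_j$ is not orthogonally invariant, and the cited GOE-block bounds do not apply to it directly.

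The missing idea is Gaussian divisibility. Take $V_j$ a GOE matrix (e.g.\ $\E(V_j)_{ab}^2=(1+\delta_{ab})/W$) and $R_j$ an independent centered symmetric Gaussian matrix with entry variances $\frac{1}{3W}-\frac{1+\delta_{ab}}{10^4W}\ge 0$. Then $A_j\stackrel{\mathrm{d}}{=}V_j/100+R_j$, and hence $H+A\stackrel{\mathrm{d}}{=}\mathrm{diag}(V_1,\ldots,V_n)/100+H_0$. Here $H_0$ collects the $R_j$, the off-diagonal blocks and $A$, and is independent of the $V_j$. Conditioning on $H_0$ puts you in the deformed-GOE setting of \cite{Wegnerorbital2019RJMS}. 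The factor $1/100$ only changes $C$, and the same decomposition covers $H^{\mathrm o}$ and $H^{\mathrm o}+A$ with the same constant. This is the entire content of the paper's proof.

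The rest of your text does not give an independent argument that could replace the citation.
\begin{itemize}
\item The trace-direction and global-shift computations lose factors $W$ and $W\sqrt n$, as you note yourself, and can be dropped.
\item The Schur-complement step is only heuristic. $\Tr P_j\mathbf 1_I(H)$ is governed by $(A_j-z-\Sigma_j(z))^{-1}$, where the self-energy $\Sigma_j(z)$ is energy-dependent and complex with $\mathrm{Im}\,\Sigma_j\ge 0$. A bounded density of states for GOE plus a fixed real symmetric shift therefore does not by itself yield $\E\Tr P_j\mathbf 1_I(H)\le CW|I|$. The uniform control over such deformations is exactly what \cite{Wegnerorbital2019RJMS} supplies.
\item Your Combes--Germinet--Klein-type induction loses up to a factor $W$ per step through rank-$W$ interlacing, as you acknowledge. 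So the $W$-independent Minami bound rests entirely on \cite[Theorem 3]{Wegnerorbital2019RJMS}.
\end{itemize}
With the GOE extraction above added, the proof is complete by citation, as in the paper.
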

    
    \begin{proof}
        By the definition of $H$ (recall \Cref{def_considered_model}), for any deterministic real symmetric matrix $A\in\R^{\ZN\times\ZN}$, we can write
        \begin{equation}
            H+A=\mathrm{diag}\, (V_1,\ldots,V_n)/100+H_0,
        \end{equation}
        where $\ha{V_1,\ldots,V_n}$ are independent $W\times W$ GOE matrices and $H_0$ is a random matrix independent of them. Then, the Wegner-Minami estimate \eqref{eq_Wegner_Minami_estimate} follows from Theorems 2 and 3 of \cite{Wegnerorbital2019RJMS}. The bound for the open-boundary form $H^{\mathrm{o}}$ can be proved by replacing each $H$ in the above argument with $H^{\mathrm{o}}$. This completes the proof of \Cref{lemma_Wegner_Minami_estimates}.
\end{proof}

    Next, we provide some basic regularity estimates on the expected spectral density $\rho_{W,N}$ defined in \eqref{def_rho_W_N}.

    \begin{lemma}[Regularity of the expected spectral density]\label{lemma_rho_W_N_regularity}
        Suppose $H\equiv H_{W,N}$ is a random band matrix with bandwidth $W\geq 1$ and $N=nW$, defined in \Cref{def_considered_model}. There exists a unique smooth function $\rho_{W,N}$ satisfying \eqref{def_rho_W_N}. Moreover, for some large constant $C\geq 1$, we have
        \begin{equation}\label{eq_regularity_rho_W_N}
            \norm{\rho_{W,N}}_{\infty}\leq C,\qquad \norm{\rho_{W,N}'}_{\infty}\leq C\p{NW}^{1/2}.
        \end{equation}
        Consequently, there exists a large constant $C\geq 1$ such that, for the Poisson kernel $P_{\eta}(\lambda):=\pi^{-1}\eta/(\lambda^2+\eta^2)$ with any $\eta\in(0,1/2)$, the following approximation holds:
        \begin{equation}\label{eq_rho_W_N_Poisson_approximation}
            \norm{\rho_{W,N}-P_{\eta}*\rho_{W,N}}_{\infty}\leq C(NW)^{1/2}\eta\log\eta^{-1}+C\eta.
        \end{equation}
\end{lemma}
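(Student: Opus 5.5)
The plan is to obtain all three claims from a decomposition of $H$ that isolates an independent Gaussian multiple of the identity. The diagonal entries $H_{xx}$, $x\in\ZN$, are i.i.d.\ $N(0,\sigma^2)$ with $\sigma^2=1/(3W)$.

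\textbf{Step 1: the decomposition.} I split each diagonal entry as $H_{xx}=d'_x+d''_x$, with all the $d'_x,d''_x$ independent $N(0,\sigma^2/2)$ and independent of the rest of $H$. I then decompose the vector $d''$ into its average $g:=N^{-1}\sum_x d''_x$ and its orthogonal complement $d''-g\mathbf 1$. These two pieces are independent (Gaussian, orthogonal projections), and $g\sim N(0,\sigma^2/(2N))=N(0,1/(6NW))$. This gives
\[
H=H_1+g\,I ,
\]
where $H_1$ is independent of $g$.

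\textbf{Step 2: Wegner for $H_1$ and the density formula.} I need a Wegner bound for $H_1$: $\E\cN_I(H_1)\le CN|I|$.
\begin{itemize}
\item $H_1$ still contains $d'$ and all off-diagonal entries with their original laws, independent of everything else.
\item Hence $H_1$ can be written, as in the proof of \Cref{lemma_Wegner_Minami_estimates}, as $\mathrm{diag}(V_1,\dots,V_n)/100+H_0$. Here the $V_j$ are independent GOE matrices built from a small fraction of $d'$ and of the in-block off-diagonal entries, and $H_0$ is independent of them. The factor $1/100$ leaves ample room after halving the diagonal variance.
\item Theorems 2 and 3 of \cite{Wegnerorbital2019RJMS} then give the bound, uniformly in $N,W$.
\end{itemize}
Let $\mu_1$ be the expected normalized spectral measure of $H_1$ and $\varphi$ the density of $g$. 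By independence,
\[
\frac1N\E\Tr f(H)=\int\!\!\int f(\lambda+t)\,\varphi(t)\,\rd t\,\mu_1(\rd\lambda).
\]
Hence $\rho_{W,N}=\mu_1*\varphi$, which is smooth because $\varphi$ is a Gaussian density. Uniqueness up to null sets is automatic, and a smooth representative is unique.

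\textbf{Step 3: the bounds \eqref{eq_regularity_rho_W_N}.}
\begin{itemize}
\item For the sup bound, \eqref{eq_Wegner_Minami_estimate} with $k=1$ gives $\mu_1(I)\le C|I|$ for every interval $I$, so $\mu_1$ has a density bounded by $C$. Since $\varphi$ is a probability density, $\norm{\rho_{W,N}}_\infty\le C$.
\item For the derivative, $\rho_{W,N}'=\mu_1*\varphi'$, so $\norm{\rho'_{W,N}}_\infty\le C\norm{\varphi'}_{L^1}$.
\item For a centered Gaussian of variance $s^2$ one has $\norm{\varphi'}_{L^1}=\sqrt{2/\pi}\,s^{-1}$. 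With $s^{-1}=(6NW)^{1/2}$ this yields $\norm{\rho'_{W,N}}_\infty\le C(NW)^{1/2}$.
\end{itemize}

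\textbf{Step 4: the Poisson-kernel approximation \eqref{eq_rho_W_N_Poisson_approximation}.} Since $\int P_\eta=1$,
\[
|\rho_{W,N}(x)-P_\eta*\rho_{W,N}(x)|\le\int P_\eta(u)\,|\rho_{W,N}(x)-\rho_{W,N}(x-u)|\,\rd u .
\]
I split this integral at $|u|=1$.
\begin{itemize}
\item On $|u|\le1$, the mean value theorem bounds the integrand by $C(NW)^{1/2}\,|u|\,P_\eta(u)$. Then
\[
\int_{|u|\le1}|u|\,\frac{\eta}{\pi(u^2+\eta^2)}\,\rd u=\frac{\eta}{\pi}\log\frac{1+\eta^2}{\eta^2}\le C\eta\log\eta^{-1}
\]
for $\eta<1/2$.
\item On $|u|>1$, I bound the integrand by $2C\,P_\eta(u)$, and $\int_{|u|>1}P_\eta\le C\eta$.
\end{itemize}
Adding the two pieces gives \eqref{eq_rho_W_N_Poisson_approximation}.

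\textbf{Main obstacle.} The only delicate point is Step 2: the Wegner estimate must be applied to $H_1$ rather than to $H$ itself. Splitting the diagonal variance in half before extracting $g$ is what keeps $H_1$ within the hypotheses of \cite{Wegnerorbital2019RJMS}. Everything else is elementary Gaussian calculus.
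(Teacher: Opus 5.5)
Your argument is correct and is essentially the paper's proof. Both extract an independent scalar Gaussian shift of variance $1/(6NW)$ via an averaged trace of diagonal entries, write $\rho_{W,N}$ as a Wegner-bounded density convolved with that Gaussian density, bound $\rho_{W,N}'$ by $C\|\varphi'\|_{L^1}\asymp (NW)^{1/2}$, and finish with the same Poisson-kernel calculus.

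The one difference is minor. The paper splits the whole matrix as $H\stackrel{\mathrm{d}}{=}(H^{(1)}+H^{(2)})/\sqrt{2}$, so that conditionally on $H^{(2)}$ the remaining matrix is a rescaled $H^{(1)}+A$ with $A$ deterministic, and \Cref{lemma_Wegner_Minami_estimates} then applies verbatim. Splitting only the diagonal instead requires you to redo, for $H_1$, the GOE-extraction argument inside that lemma's proof, which goes through as you describe.
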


    \begin{proof}
        By the Wegner estimate in \eqref{eq_Wegner_Minami_estimate} and the Radon-Nikodym theorem, there exists a bounded measurable function $\rho_{W,N}$ satisfying \eqref{def_rho_W_N} for any bounded measurable function $f$. To obtain a smooth version, let $H^{(1)},H^{(2)}$ be two independent copies of $H$. Since the entries are centered Gaussian random variables, we have
        \begin{equation}
            H\stackrel{\mathrm{d}}{=}\frac{1}{\sqrt{2}}H^{(1)}+\frac{1}{\sqrt{2}}H^{(2)}.
        \end{equation}
        Let $h:=N^{-1}\Tr\, H^{(2)}$, and write
        \begin{equation}
            H\stackrel{\mathrm{d}}{=}\frac{1}{\sqrt{2}}H^{(1)}+\frac{1}{\sqrt{2}}H^{(2)}=\qa{\frac{1}{\sqrt{2}}H^{(1)}+\frac{1}{\sqrt{2}}(H^{(2)}-h)}+\frac{h}{\sqrt{2}}=:\wh H+\frac{h}{\sqrt{2}}.
        \end{equation}
        By a direct computation of the covariance, for any $a,b\in\ZN$, we have
        \begin{equation}
            \mathrm{Cov}(\wh H_{ab},h)=\mathrm{Cov}\pa{\frac{1}{\sqrt{2}}H^{(1)}_{ab}+\frac{1}{\sqrt{2}}(H^{(2)}_{ab}-h\delta_{ab}),h}=\frac{\delta_{ab}}{\sqrt{2}}\E\pa{\frac{1}{N}\E H_{aa}^2-\frac{1}{N}\E H_{aa}^2}=0,
        \end{equation}
        which shows that the Gaussian matrix $\wh H$ is independent of $h$. Moreover, by the same argument as that at the beginning of this proof, there exists a bounded measurable probability density $\wh \rho\equiv \wh \rho_{W,N}$, such that \eqref{def_rho_W_N} holds with $(H,\rho)$ there replaced by $(\wh H,\wh \rho)$. Let $p_{h}(\lambda):=\sqrt{3NW/\pi}\exp\pa{-3NW\lambda^2}$ denote the probability density of $h/\sqrt{2}$. Then, for any bounded measurable function $f$, we have
        \begin{equation}
            \begin{aligned}
                &\int_{\R}f(\lambda)\wh \rho*p_{h}(\lambda)\, \rd \lambda=\int_{\R}f(\lambda)\int_{\R}\wh \rho(\lambda-u)p_{h}(u)\, \rd u\, \rd \lambda\\
                &\qquad\qquad=\frac{1}{N}\int_{\R}p_h(u)\E\Tr\,f(\wh H+u)\, \rd u=\frac{1}{N}\E\Tr\, f\pa{\wh H+\frac{h}{\sqrt{2}}}=\frac{1}{N}\E\Tr\, f(H).
            \end{aligned}
        \end{equation}
        Therefore, $\rho_{W,N}=\wh \rho_{W,N}*p_{h}$ up to a Lebesgue null set. This yields a bounded smooth version of $\rho_{W,N}$. Let $C\geq 1$ be a constant satisfying $\norm{\wh \rho_{W,N}}_{\infty}\leq C$. Then, we have
        \begin{equation}
            \norm{\rho_{W,N}'}_{\infty}\leq \norm{\wh \rho_{W,N}}_{\infty}\norm{p_h'}_{1}\leq 2C\pa{3NW/\pi}^{1/2},
        \end{equation}
        which yields the second bound in \eqref{eq_regularity_rho_W_N}. With \eqref{eq_regularity_rho_W_N}, the Poisson approximation \eqref{eq_rho_W_N_Poisson_approximation} follows from standard calculus. For example, uniformly in $E\in\R$, we have
        \begin{equation}
            \begin{aligned}
                &\absa{\rho_{W,N}(E)-P_{\eta}*\rho_{W,N}(E)}=\absa{\int_{\R}P_{\eta}(\lambda)(\rho_{W,N}(E-\lambda)-\rho_{W,N}(E))\, \rd \lambda}\\
                &\leq C(NW)^{1/2}\int_{|\lambda|\leq 1}|\lambda|P_{\eta}(\lambda)\, \rd \lambda+2C\int_{|\lambda|>1}P_{\eta}(\lambda)\, \rd \lambda\leq C'(NW)^{1/2}\eta\log\eta^{-1}+C'\eta.
            \end{aligned}
        \end{equation}
        This completes the proof of \Cref{lemma_rho_W_N_regularity}.
\end{proof}

    By the definition of the resolvent, for every $E\in \R$ and $\eta>0$, we have the identity
    \begin{equation}\label{eq_Poisson_rho_E_Im_Tr_G}
        P_{\eta}*\rho_{W,N}(E)=\frac{1}{\pi N}\E\mathrm{Im}\,\Tr\, G_{W,N}(E+\ii \eta).
    \end{equation}
    Therefore, the bound \eqref{eq_rho_W_N_Poisson_approximation} allows us to approximate the expected spectral density with the resolvent. Moreover, for every $x\in\ZN$, using \eqref{eq_xx_to_1_N_Tr}, \eqref{def_rho_W_N}, and the first bound in \eqref{eq_regularity_rho_W_N}, we obtain
    \begin{equation}\label{eq_E_Im_G_xx_bound}
        \begin{aligned}
            \E\mathrm{Im}\,G_{W,N}(E+\ii \eta)_{xx}=&\frac{1}{N}\E\mathrm{Im}\, \mathrm{Tr}\,G_{W,N}(E+\ii\eta)=\int_{\R}\frac{\eta}{(u-E)^2+\eta^2}\rho_{W,N}(u)\, \rd u\\
        \leq& C\int_{\R}\frac{\eta}{(u-E)^2+\eta^2}\, \rd u=C\pi.
        \end{aligned}
    \end{equation}

    We conclude this subsection by stating the fractional moment estimates and the local law.

    \begin{lemma}[Fractional moments]\label{lemma_fractional_moments}
        Suppose $H\equiv H_{W,N}$ is a random band matrix with bandwidth $W\geq 1$ and $N=nW$, defined in \Cref{def_considered_model}. For every fixed $s\in(0,1)$, there exists a constant $C_s\geq 1$ such that, for every deterministic real symmetric matrix $A\in\R^{\ZN\times\ZN}$,
        \begin{equation}\label{eq_fractional_flat_bound}
            \sup_{E\in\R}\E\absa{\q{(H+A-E)^{-1}}_{xy}}^s\leq C_{s}W^{s/2}.
        \end{equation}
        The bound \eqref{eq_fractional_flat_bound} remains valid with the $H+A$ on the left-hand side replaced by any principal minors of $H+A$ or $H^{\mathrm{o}}+A$.
        
        Fix a compact set $K\subseteq \R$. For the open-boundary form $H^{\mathrm{o}}$, there exist constants $c,q\in(0,1)$ and $C\geq 1$, depending only on $K$, such that, for all deterministic real symmetric matrices $U,V\in\R^{W\times W}$,
        \begin{equation}\label{eq_fractional_decay_bound_H_o}
            \sup_{E\in K}\E\norma{G^{U,V}(E)_{\qa{0}\qa{x}}}^q\leq CW^{C}\exp(-c|x|/W^2).
        \end{equation}
        Here, we denote $G^{U,V}(E):=(H^{U,V}-E)^{-1}$, and $H^{U,V}$ is defined by adding $U$ and $V$ to the $(\q{0},\q{0})$-block and $(\q{N-1},\q{N-1})$-block of $H^{\mathrm{o}}$, respectively. For the periodic form $H$, there exist constants $c,q\in(0,1)$ and $C\geq 1$, depending only on $K$, such that
        \begin{equation}\label{eq_fractional_decay_bound_H}
            \sup_{E\in K}\E\absa{G_{W,N}(E)_{xy}}^{q}\leq CW^{C}\exp\pa{-c|x-y|/W^2}.
        \end{equation}
    \end{lemma}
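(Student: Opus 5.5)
The plan is to derive both parts of \Cref{lemma_fractional_moments} from the Gaussian block structure used in the proof of \Cref{lemma_Wegner_Minami_estimates}. That is, write $H+A=\mathrm{diag}(V_1,\ldots,V_n)/100+H_0$, with $V_j$ independent GOE blocks whose entries have variance of order $1/W$, and $H_0$ independent of them. The flat bound \eqref{eq_fractional_flat_bound} will come from a single-site/two-site a priori estimate in the spirit of Aizenman--Molchanov. The decay bounds \eqref{eq_fractional_decay_bound_H_o}--\eqref{eq_fractional_decay_bound_H} will be imported from \cite{Localization1_2} and transported to the periodic geometry by a Schur complement.

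\emph{Flat bound.} Fix $x,y$ and condition on every Gaussian variable except the diagonal entries $v_x:=(V_{[x]})_{xx}/100$ and $v_y:=(V_{[y]})_{yy}/100$ (for $x=y$, only $v_x$). The other variables still leave a Gaussian density of order $W^{1/2}$ for these entries, and this survives the subtraction of the shared GOE normalisation by a harmless change of variables. By the Schur complement onto $\{x,y\}$, the $2\times2$ matrix $(G_{ab})_{a,b\in\{x,y\}}$ equals $(\mathrm{diag}(v_x,v_y)-M)^{-1}$, where $M$ is independent of $(v_x,v_y)$. I then invoke the standard decoupling lemma: for densities with sup-norm $\le \sigma^{-1}$,
\[
\E_{v_x,v_y}\bigl|[(\mathrm{diag}(v_x,v_y)-M)^{-1}]_{xy}\bigr|^s\le C_s\sigma^{-s},
\]
uniformly in the complex symmetric $M$, and here $\sigma\asymp W^{-1/2}$. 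The key point is that the off-diagonal entry is a M\"obius function of each variable separately, so the bound follows from $\E|1/(v-a)|^s\le C_s\sigma^{-s}$ applied twice. This gives $C_sW^{s/2}$ uniformly in $E$ and $A$. Since the argument only uses the diagonal Gaussian entries, it applies verbatim to principal minors of $H+A$ and $H^{\mathrm o}+A$.

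\emph{Decay bounds.} For $H^{U,V}$, the estimate \eqref{eq_fractional_decay_bound_H_o} is the main theorem of \cite{Localization1_2}, stated there for block band operators on an interval with arbitrary deterministic boundary blocks. The only work is checking that the constants are uniform in $U,V$. I expect this to follow because Drogin's argument runs through transfer matrices and fractional moments of the boundary-to-boundary resolvent, and it is uniform over the self-adjoint boundary data that enter through the Schur complements at the ends.

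For the periodic form, I would rotate so that $[x]$ sits at distance $\asymp n/2$ from a block $[\ell]$ chosen on the longer arc avoiding $x,y$, and take the Schur complement over the block $[\ell]$ that cuts the ring. Removing $[\ell]$ leaves an open chain whose ends carry the correction terms $-B G^{(\ell)}B^{\mathsf T}$. I will instead condition on the cut region and write $G_{xy}$ through the open chain with boundary perturbations $U,V$ given by these Schur-complement terms. These depend only on variables independent of the chain, so \eqref{eq_fractional_decay_bound_H_o} applies conditionally, with $U,V$ treated as deterministic. The remaining term is the one connecting through $[\ell]$. Expanding $G=G^{[\ell]}+\ldots$ and using H\"older, together with the flat bound \eqref{eq_fractional_flat_bound} and the polynomial factor $W^C$, gives a contribution bounded by $W^C e^{-c\,\mathrm{dist}/W^2}$ along the other arc. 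Since on the ring $|x-y|$ is the shorter distance, both paths are at least $|x-y|$, which yields \eqref{eq_fractional_decay_bound_H}.

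The main obstacle is this last step. Two issues arise: the boundary data produced by the cut are random and only conditionally deterministic, and products of resolvent entries must be controlled by fractional moments after H\"older. I would handle the latter by choosing $q$ smaller than the exponent given by \cite{Localization1_2} and absorbing the loss into $W^C$.
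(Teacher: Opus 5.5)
\textbf{The main gap: uniformity in $U,V$ in \eqref{eq_fractional_decay_bound_H_o}.} You take this estimate wholesale from \cite{Localization1_2}. You then leave its uniformity in $U,V$ as an expectation about the internal structure of Drogin's transfer-matrix argument. That uniformity is exactly what the statement asserts, and it is load-bearing. Your periodic step feeds in random, only conditionally frozen, boundary data, and so does any extension from the end block to general $x$.

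The missing idea is that you never need uniformity from the reference. For the end-to-end case $[x]=[N-1]$, the paper applies the Schur complement twice to peel off the two deformed end blocks. This writes $G^{U,V}_{[0][N-1]}$ as a product of five factors:
\begin{itemize}
\item the end-block resolvent $G^{U,V}_{[0][0]}$;
\item the Gaussian hopping block from block $0$ to block $1$;
\item the end-to-end block of the resolvent of the \emph{undeformed} middle chain;
\item the Gaussian hopping block from block $n-2$ to block $n-1$;
\item the last diagonal block of the resolvent of the chain on blocks $1,\dots,n-1$, which carries $V$ only.
\end{itemize}
The two resolvent end-block factors are controlled by the flat bound \eqref{eq_fractional_flat_bound}. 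That bound holds for every deterministic perturbation $A$, hence uniformly in $U,V$, and passing to the Hilbert--Schmidt norm costs only $W^C$. The hopping blocks have bounded norm moments. Drogin's theorem is invoked only for the undeformed middle chain. H{\"o}lder's inequality with five factors and $q\le 1/25$ then closes the argument.

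General $x$ follows by Schur-complementing away the blocks beyond $[x]$. The resulting self-energy sits on the last block of the shortened chain and is independent of it. So the end-to-end case, now known to be uniform in $V$, applies conditionally.

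\textbf{Two further points.}

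First, your justification of the two-site decoupling is wrong as written. Write $a,d$ for the diagonal entries of $M$ and $b$ for its off-diagonal entry. The entry you need is $b/\bigl((v_x-a)(v_y-d)-b^2\bigr)$. Integrating the M{\"o}bius dependence first in $v_x$ and then in $v_y$ gives $C_s^2\sigma^{-2s}|b|^s$, which is not uniform in $M$.

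The paper instead averages over $u=(H_{xx}+H_{yy})/2$, which is independent of $(H_{xx}-H_{yy})/2$ because $H_{xx},H_{yy}$ are i.i.d.\ Gaussians. The $2\times 2$ block of the resolvent is then $(u+\hat w)^{-1}$, where $\hat w$ is real symmetric (since $E$ is real) and independent of $u$. Diagonalizing $\hat w$ gives $|G_{xy}|^s\le |u+\lambda_1|^{-s}+|u+\lambda_2|^{-s}$, so a single one-variable average suffices. Uniformity over complex $M$ is neither needed nor justified.

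Second, your periodic step has two problems.
\begin{itemize}
\item Schur-complementing out a block $[\ell]$ produces a self-energy that also couples the two end blocks $[\ell\pm1]$ to each other. The reduced operator is therefore not of the form $H^{U,V}$.
\item Even after expanding that cross-coupling, you need interior-to-interior decay on the chain. The stated \eqref{eq_fractional_decay_bound_H_o}, which runs from an end block to $[x]$, does not give this without a further reduction.
\end{itemize}
The paper avoids both problems by cutting at $[y]$ itself:
$G_{xy}=-\sum_{a\in[y]^{\mathrm c},\,b\in[y]}\bigl[(H^{\mathrm o}_{[y]^{\mathrm c}[y]^{\mathrm c}}-E)^{-1}\bigr]_{xa}H_{ab}G_{by}$.
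Every relevant $a$ lies in an end block of the open chain $[y]^{\mathrm c}$, so \eqref{eq_fractional_decay_bound_H_o} with $U=V=0$ applies directly. A three-factor H{\"o}lder inequality, with the flat bound controlling $G_{by}$, finishes the proof without any random boundary data.
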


    \begin{proof}
        For the bound \eqref{eq_fractional_flat_bound}, given any deterministic real symmetric $A\in\R^{\ZN\times\ZN}$, if $x=y$, the Schur complement with respect to $H_{xx}$ yields
        \begin{equation}
            [(H+A-E)^{-1}]_{xx}=(H_{xx}-w_{x}\qa{(H_{ab}:(a,b)\neq (x,x))})^{-1},
        \end{equation}
        where $w_{x}\qa{\cdot}$ is a real function of variables $(H_{ab}:(a,b)\neq (x,x))$. Then we have
        \begin{equation*}
            \E\absa{[(H+A-E)^{-1}]_{xx}}^s\leq \E\E_{H_{xx}}\absa{H_{xx}-w_{x}\qa{(H_{ab}:(a,b)\neq (x,x))}}^{-s}\leq \sup_{w\in\R}\E|H_{xx}-w|^{-s}\leq C_sW^{s/2}.
        \end{equation*}
        The proof of the off-diagonal case is similar. For $x\neq y$, denote $u:= (H_{xx}+H_{yy})/2$ and $v:= (H_{xx}-H_{yy})/2$. Then, $u$ and $v$ are independent random variables. By the Schur complement, we have
        \begin{equation}
            \begin{aligned}
                [(H+A-E)^{-1}]_{\ha{x,y},\ha{x,y}}=&(H_{\ha{x,y},\ha{x,y}}-w_{x,y}\qa{(H_{ab}:(a,b)\notin \ha{x,y}\times\ha{x,y})})^{-1}\\
                =&\pa{u+\wh w[(H_{xy},v,(H_{ab}:(a,b)\notin \ha{x,y}\times\ha{x,y}))]}^{-1},
            \end{aligned}
        \end{equation}
        where $w_{x,y}$ and $\wh w$ are functions taking values in the space of real symmetric $2\times 2$ matrices. This gives
        \begin{equation}
            [(H+A-E)^{-1}]_{xy}=(u+\lambda_{x})^{-1}\bv_{x}(x)\ol{\bv}_{x}(y)+(u+\lambda_{y})^{-1}\bv_{y}(x)\ol{\bv}_{y}(y),\end{equation}
        where $\lambda_{x},\lambda_{y}$ are the eigenvalues of the matrix $\wh w[(H_{xy},v,(H_{ab}:(a,b)\notin \ha{x,y}\times\ha{x,y}))]$ and $\bv_{x},\bv_{y}$ are corresponding orthonormal eigenvectors. 
        Since $u$ is independent of $(H_{xy},v,(H_{ab}:(a,b)\notin \ha{x,y}\times\ha{x,y}))$, we have
        \begin{equation}
            \begin{aligned}
                &\E\absa{[(H+A-E)^{-1}]_{xy}}^s\leq \E\E_{u}|u+\lambda_{x}|^{-s}+\E\E_{u}|u+\lambda_{y}|^{-s} \leq 2 \sup_{\lambda\in\R}\E_{u}|u-\lambda|^{-s}\leq C_sW^{s/2}.
            \end{aligned}
        \end{equation}
        This completes the proof of \eqref{eq_fractional_flat_bound}.

The endpoint case of the bound \eqref{eq_fractional_decay_bound_H_o}, i.e., the case with $\qa{x}=\qa{N-1}$, follows from \cite[Theorem 2]{Localization1_2}. Specifically, we remove the endpoint deformations by the Schur complement. For any $0\leq i\leq j\leq n-1$, denote
\begin{equation}
	[i:j]\equiv[i:j]_{W}:=\bigcup_{k=i}^{j}\qa{kW}.
\end{equation}
Applying the Schur complement twice, we have
%\begin{equation}
\begin{align*}
	&G^{U,V}(E)_{\qa{0}\qa{N-1}}=-G^{U,V}(E)_{\qa{0}\qa{0}}H_{\qa{0}\qa{W}}\qb{\p{H^{U,V}_{[1:n-1]}-E}^{-1}}_{\qa{W}\qa{N-1}}\\
	=&G^{U,V}(E)_{\qa{0}\qa{0}}H_{\qa{0}\qa{W}}\qb{\p{H^{\mathrm{o}}_{[2:n-2]}-E}^{-1}}_{\qa{W}\qa{N-W-1}}H_{\qa{N-W-1}\qa{N-1}}\qb{\p{H^{U,V}_{[1:n-1]}-E}^{-1}}_{\qa{N-1}\qa{N-1}},
\end{align*}
%\end{equation}
which, together with a direct application of H{\" o}lder's inequality, immediately yields for any $q\in(0,1/25]$,
%\begin{equation}
\begin{align}
	&\E\norm{G^{U,V}(E)_{\qa{0}\qa{N-1}}}^q\leq \pb{\E\normb{G^{U,V}(E)_{\qa{0}\qa{0}}}^{5q}}^{1/5}\pa{\E\norm{H_{\qa{0}\qa{W}}}^{5q}}^{1/5}\pb{\E\normb{\qb{\p{H^{\mathrm{o}}_{[2:n-2]}-E}^{-1}}_{\qa{W}\qa{N-W-1}}}^{5q}}^{1/5}\nonumber\\
	&\qquad\qquad\qquad\qquad\quad\,\times\pa{\E\norm{H_{\qa{N-W-1}\qa{N-1}}}^{5q}}^{1/5}\pb{\E\normb{\qb{\p{H^{U,V}_{[1:n-1]}-E}^{-1}}_{\qa{N-1}\qa{N-1}}}^{5q}}^{1/5}.\label{eq_G_U_V_0_N_1_decomposition_bound}
\end{align}
%\end{equation}
Applying \eqref{eq_fractional_flat_bound} and using the fact that the operator norm is bounded by the Hilbert-Schimidt norm, we bound the first and the last factors on the right-hand side of \eqref{eq_G_U_V_0_N_1_decomposition_bound} as
\begin{equation}
	\E\normb{G^{U,V}(E)_{\qa{0}\qa{0}}}^{5q}+\E\normb{\qb{\p{H^{U,V}_{[1:n-1]}-E}^{-1}}_{\qa{N-1}\qa{N-1}}}^{5q}\leq CW^{C}.
\end{equation}
By standard operator norm bound of random matrices, the second and the fourth factors on the right-hand side of \eqref{eq_G_U_V_0_N_1_decomposition_bound} are bounded as
\begin{equation}
	\E\norm{H_{\qa{0}\qa{W}}}^{5q}\leq \pa{\E\norm{H_{\qa{0}\qa{W}}}}^{5q}\leq C_{q},\qquad \E\norm{H_{\qa{N-W-1}\qa{N-1}}}^{5q}\leq \pa{\E\norm{H_{\qa{N-W-1}\qa{N-1}}}}^{5q}\leq C_q.
\end{equation}
Finally, for the third factor on the right-hand side of \eqref{eq_G_U_V_0_N_1_decomposition_bound}, applying \cite[Theorem 2]{Localization1_2} and bounding the operator norm by the Hilbert-Schimidt norm, we obtain
\begin{equation}
	\E\normb{\qb{\p{H^{\mathrm{o}}_{[2:n-1]}-E}^{-1}}_{\qa{W}\qa{N-W-1}}}^{5q}\leq CW^{C}\exp[-c(N-2W-1)/W^2]\leq CW^{C}\exp[-c'N/W^2].
\end{equation}
Here, in the above three estimates, $C,C_q,c,c'$ are all constants. Therefore, plugging these estimates back into \eqref{eq_G_U_V_0_N_1_decomposition_bound} yields the desired endpoint case of \eqref{eq_fractional_decay_bound_H_o}.

        To extend the estimate \eqref{eq_fractional_decay_bound_H_o} to general $x\in\ZN$, it suffices to consider the case $\qa{x}=\qa{\ell W}$ for $\ell\in\ha{0,1,\ldots,n-1}$. Moreover, the case $\ell=n-1$ follows from the endpoint case $\qa{x}=\qa{N-1}$. Fix an $\ell\in \ha{0,1,\ldots,n-2}$, and denote
        \begin{equation}
            I:=\bigcup_{k=0}^{\ell}\qa{k W}.
        \end{equation}
        Then, the Schur complement provides
        \begin{equation}
            \begin{aligned}
                G^{U,V}(E)_{\qa{0}\qa{x}}=\qa{G^{U,V}(E)_{II}}_{\qa{0}\qa{x}}=\qbb{\pa{H_{II}^{U,V}-E-H^{\mathrm{o}}_{II^{\txt{c}}}\pa{H^{U,V}_{I^{\txt{c}}I^{\txt{c}}}-E}^{-1}H_{I^{\txt{c}}I}^{\mathrm{o}}}^{-1}}_{\qa{0}\qa{x}}.
            \end{aligned}
        \end{equation}
        By the definition of $H$, the matrix
        \begin{equation}
            H^{\mathrm{o}}_{II^{\txt{c}}}\pa{H^{U,V}_{I^{\txt{c}}I^{\txt{c}}}-E}^{-1}H_{I^{\txt{c}}I}^{\mathrm{o}}
        \end{equation}
         is supported on the $\qa{\ell W}\qa{\ell W}$ block and is independent of $H_{II}^{U,V}$. Therefore, the endpoint estimate of the matrix $H_{II}^{U,V}$ on $I$ yields the bound \eqref{eq_fractional_decay_bound_H_o}.\footnote{Strictly speaking, \eqref{eq_fractional_decay_bound_H_o} does not apply when $\ell<99$ since we assume $n\geq 100$ in \Cref{def_considered_model}. But this subtlety can handled by applying the uniform rough bound \eqref{eq_fractional_flat_bound} when $\ell<99$.}

        Finally, for the bound \eqref{eq_fractional_decay_bound_H}, when $\qa{x}=\qa{y}$, it readily follows from \eqref{eq_fractional_flat_bound}. For the case $\qa{x}\neq \qa{y}$, we apply the resolvent identity
        \begin{equation}
            G(E)_{\qa{y}^{\txt{c}}\qa{y}}=-(H^{\mathrm{o}}_{\qa{y}^{\txt{c}}\qa{y}^{\txt{c}}}-E)^{-1}H_{\qa{y}^{\txt{c}}\qa{y}}G(E)_{\qa{y}\qa{y}},
        \end{equation}
        which yields
        \begin{equation}
            G(E)_{xy}=-\sum_{a\in\qa{y}^{\txt{c}}}\sum_{b\in\qa{y}}\qb{(H^{\mathrm{o}}_{\qa{y}^{\txt{c}}\qa{y}^{\txt{c}}}-E)^{-1}}_{xa}H_{ab}G(E)_{by}.
        \end{equation}
        Here, to highlight that this matrix is of open-boundary form, we refer to the restriction of $H$ to $\qa{y}^{\txt{c}}$ as $H^{\mathrm{o}}_{\qa{y}^{\txt{c}}\qa{y}^{\txt{c}}}$. 
        Taking $s=q/3\in(0,1/3)$, where $q$ is the exponent in \eqref{eq_fractional_decay_bound_H_o}, and applying H{\" o}lder's inequality, we obtain
\begin{align*}
                &\E\absa{G(E)_{xy}}^s\leq \sum_{a\in\qa{y}^{\txt{c}}}\sum_{b\in\qa{y}}\pB{\E\absa{\qb{(H^{\mathrm{o}}_{\qa{y}^{\txt{c}}\qa{y}^{\txt{c}}}-E)^{-1}}_{xa}}^{3s}}^{1/3}\pa{\E|H_{ab}|^{3s}}^{1/3}\pa{\E|G(E)_{by}|^{3s}}^{1/3}\\
                &\leq CW^{C}\sum_{a\in\qa{y}^{\txt{c}}:|\qa{a}-\qa{y}|\leq 1}\sum_{b\in\qa{y}}\pB{\E\absa{\qb{(H^{\mathrm{o}}_{\qa{y}^{\txt{c}}\qa{y}^{\txt{c}}}-E)^{-1}}_{xa}}^{3s}}^{1/3}\leq C'W^{C'}\exp(-c|x-y|/W^2).
            \end{align*}
Here, $C,C',c$ are constants, and we also used the bounds $\E|H_{ab}|^{3s}\leq 100W^{-3s/2}\mathbf{1}_{|\qa{a}-\qa{b}|\leq 1}$ and \eqref{eq_fractional_flat_bound} in the second step, together with the bound \eqref{eq_fractional_decay_bound_H_o} in the third step. This yields the bound \eqref{eq_fractional_decay_bound_H} and hence completes the proof of \Cref{lemma_fractional_moments}.
\end{proof}

    By the maximum principle, the fractional moment estimate \eqref{eq_fractional_decay_bound_H} can be immediately extended to the resolvents with complex spectral parameters.
    \begin{lemma}\label{lemma_fractional_decay_bound_H_E_i_eta}
        Under the assumptions of \Cref{lemma_fractional_moments}, for every fixed compact set $K\subseteq \R$, there exist constants $c,q\in(0,1)$ and $C\geq 1$, depending only on $K$, such that
        \begin{equation}\label{eq_fractional_decay_bound_H_E_i_eta}
            \sup_{E\in K}\sup_{\eta\in[0,1]}\E\absa{G_{W,N}(E+\ii\eta)_{xy}}^{q}\leq CW^{C}\exp\pa{-c|x-y|/W^2}.
        \end{equation}
    \end{lemma}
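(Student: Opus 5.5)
We extend \eqref{eq_fractional_decay_bound_H} from real to complex spectral parameters using the subharmonicity of $z\mapsto \E|G(z)_{xy}|^q$ in the upper half plane, combined with a harmonic-measure (maximum principle) argument on a rectangle. Fix $K$ and enlarge it to a compact neighbourhood $K'=\{E:\mathrm{dist}(E,K)\le 1\}$. Apply \Cref{lemma_fractional_moments} with $K'$ to get constants $c,q,C$ for which \eqref{eq_fractional_decay_bound_H} holds on $K'$. For fixed $x,y$, the function $z\mapsto G(z)_{xy}$ is analytic in $\C^+$, so $|G(z)_{xy}|^q$ is subharmonic there. Taking expectations preserves subharmonicity, because the sub-mean-value inequality is linear; local integrability and upper semicontinuity will follow from the uniform bounds below. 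Hence $u(z):=\E|G(z)_{xy}|^q$ is subharmonic on $\C^+$.

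Next we control $u$ on the boundary of the rectangle $R=\{E+\ii\eta: E\in K', \eta\in(0,2)\}$.
\begin{itemize}
\item \emph{Bottom edge.} As $\eta\downarrow 0$, $G(E+\ii\eta)_{xy}\to G(E)_{xy}$ almost surely for each fixed $E$, since $E$ is almost surely not an eigenvalue. To pass to the limit in expectation we use uniform integrability: \eqref{eq_fractional_flat_bound}, applied with $A=-\ii\eta$, or equivalently the same Schur-complement argument with complex shift, bounds $\E|G(E+\ii\eta)_{xy}|^{2q}$ by $C W^{q}$ uniformly in $\eta$. This gives $u(E+\ii 0)\le CW^C e^{-c|x-y|/W^2}$ on $K'$.
\item \emph{Top edge and sides.} We use the trivial bound $|G|\le \eta^{-1}$ on the top edge, and the uniform bound $u\le C W^{q/2}$ on the sides, which follows from the complex analogue of \eqref{eq_fractional_flat_bound}. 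The Schur-complement proof of \eqref{eq_fractional_flat_bound} goes through verbatim with $E$ complex, since $\sup_{w\in\C}\E|H_{xx}-w|^{-s}\le\sup_{w\in\R}\E|H_{xx}-w|^{-s}$ for a real Gaussian $H_{xx}$.
\end{itemize}

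On $R$ we then bound $u$ by the harmonic extension of the boundary data, i.e.\ by the integral of the boundary values against harmonic measure. This step is the main obstacle. The non-decaying bound $CW^{C}$ on the top and side edges contributes with a harmonic-measure weight that does not vanish for interior points with $\eta\in[0,1]$ and $E\in K$. So the naive maximum principle gives only the flat bound, not the exponential decay.

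To fix this, we would instead compare at the level of powers: apply subharmonicity to $u^{\theta}$? That is not available in general. A cleaner route is to use the fact that $\log|G_{xy}|$ is subharmonic and run a two-constants / Hadamard-type argument. Concretely, with $\omega(z)$ the harmonic measure of the bottom edge seen from $z$, which is bounded below by some $\omega_0>0$ for $z\in K\times[0,1]$, we would aim for
\[
\E|G(z)_{xy}|^{q\omega_0}\lesssim \bigl(\E|G(\cdot)_{xy}|^q\bigr)^{\omega_0}_{\text{bottom}}\bigl(\sup_{\partial R}\E|G|^q\bigr)^{1-\omega_0}.
\]
This would be obtained via Jensen/Hölder on the Poisson representation of the subharmonic function $\log|G(z)_{xy}|$. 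After renaming $q\omega_0\to q$ and $c\omega_0\to c$, it yields \eqref{eq_fractional_decay_bound_H_E_i_eta} with $C$ enlarged. If this interpolation proves awkward, the fallback is to redo the Drogin input directly at complex energy. The Schur-complement and transfer-operator arguments behind \eqref{eq_fractional_decay_bound_H_o} only use the real Gaussian disorder, and adding $\ii\eta$ with $\eta\ge0$ only improves the relevant bounds.
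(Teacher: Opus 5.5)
Your final argument is correct and is essentially the paper's proof. You majorize the subharmonic function $\log|G(\cdot)_{xy}|$ pathwise by its Poisson integral on the rectangle, then apply Jensen and H\"older, which is a hands-on proof that $\log\E|G_{xy}|^q$ obeys the two-constant inequality. The paper applies Ransford's two-constant theorem to $\log\E|G(z)_{xy}|^q$ on the whole upper half-plane with good boundary set $J\supseteq K+[-1,1]$, whose harmonic measure seen from $E+\ii\eta$ is at least $1/2$, so no side or top edges arise. Your Jensen--H\"older step in fact gives $\E|G(z)_{xy}|^q\le a^{\omega_0}b^{1-\omega_0}$ with exponent $q$, not $q\omega_0$, on the left, so the fallback is unnecessary.

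One justification needs repair: the complex flat bound you use on the sides and for uniform integrability. For $x\neq y$ the Schur-complement argument does not go through verbatim at complex $z$: the $2\times 2$ matrix becomes complex symmetric and need not be orthogonally diagonalizable, and $A=-\ii\eta$ is not real symmetric. Get it as the paper does, from subharmonicity of $\E|G_{xy}|^s$ and the Poisson integral of the real-axis bound \eqref{eq_fractional_flat_bound}. That step can be justified by the a priori bound $\E|G(z)_{xy}|^s\le\E\sum_k|\lambda_k-z|^{-s}\le C_sN$, which follows from the Wegner estimate.
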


    \begin{proof}
        Suppose that $K+[-1,1]\subseteq J$, where $J\subseteq \R$ is a finite closed interval, and let $q\in(0,1)$ be the constant in \eqref{eq_fractional_decay_bound_H} with $K$ replaced by $J$. We define
        \begin{equation}
            u(z):=\log\E\absa{G_{W,N}(z)_{xy}}^{q},\qquad \forall z\in\C_+.
        \end{equation}
        Clearly, $u(z)$ is subharmonic on the upper half-plane. Moreover, taking $s\in(q,1)$ in \eqref{eq_fractional_flat_bound} and using the fact that $z\mapsto \E\absa{G_{W,N}(E+\ii \eta)_{xy}}^{s}$ is subharmonic on $\C_+$, we have
        \begin{equation}
            \E\absa{G_{W,N}(E+\ii \eta)_{xy}}^{s}\leq \int_{\R}P_{\eta}(E-\lambda)\E\absa{G_{W,N}(\lambda)_{xy}}^{s}\, \rd \lambda\leq C_sW^{s/2},\qquad \forall E+\ii \eta\in\C_+.
        \end{equation}
        This higher moment bound, together with the bound $\E\absa{G_{W,N}(E)_{xy}}^{s}\leq C_sW^{s/2}$ from \eqref{eq_fractional_flat_bound}, ensures that $u(z)$ extends continuously to $\R$ as $u(E):=\log\E\absa{G_{W,N}(E)_{xy}}^{q}$. By \eqref{eq_fractional_flat_bound} and \eqref{eq_fractional_decay_bound_H}, on the boundary $\partial\C_+=\R$, for constants $C\geq 1$ and $c\in(0,1)$, the function $u$ is bounded as
        \begin{equation}
            u(E)\leq
            \begin{cases}
                C(1+\log W), & E\notin J,\\
                C(1+\log W)-c|x-y|/W^2, & E\in J.
            \end{cases}
        \end{equation}
        Then, for every $E\in K$ and $\eta\in(0,1]$ the two-constant theorem in \cite[Theorem 4.3.7]{Ransford_1995} yields
        \begin{equation}
            \begin{aligned}
                u(z)\leq& C(1+\log W)-c\frac{|x-y|}{\pi W^2}\int_{J}\frac{\eta}{(E-\lambda)^2+\eta^2}\, \rd \lambda\\
                \leq& C(1+\log W)-c\frac{|x-y|}{\pi W^2}\int_{|\lambda|\leq \eta}\frac{\eta}{\lambda^2+\eta^2}\, \rd \lambda= C(1+\log W)-c\frac{|x-y|}{2 W^2},
            \end{aligned}
        \end{equation}
        where we also used the fact that $[E-1,E+1]\subseteq K+[-1,1]\subseteq J$ in the second step. Together with \eqref{eq_fractional_decay_bound_H}, this provides, uniformly in $E\in K$ and $\eta\in[0,1]$,
        \begin{equation}
            \E\absa{G_{W,N}(E+\ii\eta)_{xy}}^{q}\leq CW^{C}\exp\pa{-c|x-y|/W^2},
        \end{equation}
        where $C\geq 1$ is a large constant and $c\in (0,1)$ is a small constant. Taking the supremum over $E\in K$ and $\eta\in[0,1]$ yields the fractional moment bound \eqref{eq_fractional_decay_bound_H_E_i_eta} and completes the proof of \Cref{lemma_fractional_decay_bound_H_E_i_eta}.
\end{proof}

    The following local law from \cite{erdos2025zigzagstrategyrandomband} will serve as the key input for controlling the density of states.

    \begin{lemma}[Local law in the delocalized phase]\label{lemma_local_law}
        Consider the random band matrix $H\equiv H_{W,N}$ defined in \Cref{def_considered_model}, fix any small constants $\kappa,c>0$, and suppose that $W^2\geq N^{1+c}$. Then, for any fixed small constants $\tau,\varepsilon>0$ and large constants $D,D'\geq 1$, there exists a large integer $W_0\equiv W_0\pa{\kappa,c,\tau,\varepsilon,D,D'}\geq 1$, such that, for every $W\geq W_0$,
        \begin{equation}\label{eq_local_law}
            \P\pa{\bigcap_{|E|\leq 2-\kappa}\bigcap_{N^{-1+\tau}\leq \eta\leq 1}\bigcap_{x,y\in\ZN}\ha{|G(E+\ii \eta)_{xy}-m_{\txt{sc}}\delta_{xy}|^2\leq \frac{W^{\varepsilon}}{\ell_{\eta}\eta}\pa{\frac{|x-y|}{\ell_{\eta}}+1}^{-D}}}\geq 1-W^{-D'}.
        \end{equation}
        Here, the characteristic length is defined by
        \begin{equation}
            \ell_\eta:=\min\pa{W\eta^{-1/2},N}.
        \end{equation}
    \end{lemma}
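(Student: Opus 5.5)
This local law is not proved from scratch here. The plan is to import it from the delocalized-regime results of \cite{erdos2025zigzagstrategyrandomband}, which extend \cite{Band1D}, and to check that our block Gaussian model satisfies their hypotheses. Concretely, $H$ in \Cref{def_considered_model} is a real symmetric band matrix. Its variance profile $S_{xy}=\E H_{xy}^2=(3W)^{-1}\mathbf{1}(|\qa{x}-\qa{y}|\leq 1)$ is doubly stochastic, since each row meets exactly three blocks of size $W$. It is a block-constant profile of width comparable to $W$ on the periodic lattice $\ZN$, with Gaussian entries, so all moment conditions hold. The corresponding Dyson equation with this stochastic profile has the translation-invariant solution $m_{\txt{sc}}$. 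Hence the deterministic approximation of $G$ in their theorem is $m_{\txt{sc}}\delta_{xy}$, and in the bulk $|E|\leq 2-\kappa$ the usual stability bounds hold uniformly.

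The second step is to match the error terms. The cited results give entrywise bounds of the form $|G_{xy}-m\delta_{xy}|^2\prec (W\ell_\eta\eta)^{-1}$ times a decay factor $(1+|x-y|/\ell_\eta)^{-D}$. Here $\ell_\eta\asymp \min(W\eta^{-1/2},N)$ is the diffusive length scale coming from the propagator $(1-m^2S)^{-1}$, whose kernel decays on scale $W/\sqrt{|1-m^2|}\asymp W\eta^{-1/2}$, capped at $N$ by periodicity. Since $W^{-1}\leq 1$, this is stronger than the stated bound $W^{\varepsilon}/(\ell_\eta\eta)$, so I would only need a weakened form. The range $\eta\geq N^{-1+\tau}$ is exactly what those results allow once $W^2\geq N^{1+c}$. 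In that regime $\ell_\eta=N$ for $\eta\leq W^2/N^2$, and the bound $1/(N\eta)\leq N^{-\tau}$ remains small.

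Third, I would pass from fixed $(E,\eta,x,y)$ to the simultaneous event. The stochastic domination yields probability $1-W^{-D''}$ for each fixed point with arbitrary $D''$. Since $N\leq W^{2}$ is polynomial in $W$ (here $N\geq W$ and $W^2\geq N^{1+c}$), a union bound over an $N^{-10}$-grid of $(E,\eta)$ and all $N^2$ pairs $(x,y)$ costs only a polynomial factor in $W$. The Lipschitz bound $\|\partial_z G\|\leq\eta^{-2}\leq N^2$ then extends the bound to the continuum, adjusting $\varepsilon$ slightly. The threshold $W_0$ absorbs the implicit constants in $\prec$.

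The main obstacle is bookkeeping rather than mathematics. I must verify that the cited theorem, stated for general profiles, covers the block profile and exactly this range of $\eta$, including the regime $\ell_\eta=N$ near $\eta\sim N^{-1}$. I must also check that the off-diagonal decay is stated with arbitrary $D$. If only an averaged or isotropic form is available, I would recover the entrywise decay through a $T$-variable argument: bound $|G_{xy}|^2$ by $\sum_w S_{xw}|G_{wy}|^2$ plus error terms, then use the propagator decay.
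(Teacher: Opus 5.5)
Your route is the paper's route. The lemma is not proved there but imported from Erd\H{o}s--Riabov: the paper cites (3.17) of their Theorem 3.3 together with case (ii) of their Proposition 2.8. Your hypothesis check is the relevant one: the block profile is doubly stochastic, so the Dyson equation is solved by $m_{\mathrm{sc}}$ and the deterministic approximation is $m_{\mathrm{sc}}\delta_{xy}$. The strategy is sound, but several things you say about the imported result are wrong and should be fixed.

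First, the cited bound is not $(W\ell_\eta\eta)^{-1}$ with $\ell_\eta\asymp\min(W\eta^{-1/2},N)$ counted in lattice sites; such a bound cannot hold. By the Ward identity, $\sum_y|G_{xy}|^2=\mathrm{Im}\,G_{xx}/\eta\gtrsim\eta^{-1}$ in the bulk. Your bound, summed over $y$, would instead give $\sum_y|G_{xy}-m_{\mathrm{sc}}\delta_{xy}|^2\lesssim W^{\varepsilon-1}\eta^{-1}$, a contradiction once $\eta\ll1$. You have conflated the block-unit length $\ell(\eta)=\min(\eta^{-1/2},n)$ of Yau--Yin, for which the bound reads $(W\ell(\eta)\eta)^{-1}$, with the lattice-unit length $\ell_\eta=W\ell(\eta)$. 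The correct bound is $(\ell_\eta\eta)^{-1}$ up to the stochastic-domination loss. So there is no spare factor $W^{-1}$: the $W^{\varepsilon}$ in \eqref{eq_local_law} is exactly that loss. The lemma still follows, but without the slack you claim.

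Second, the diffusive scale $W\eta^{-1/2}$ comes from the propagator $(1-|m_{\mathrm{sc}}|^2S)^{-1}$, where $1-|m_{\mathrm{sc}}|^2\asymp\eta$. The propagator $(1-m_{\mathrm{sc}}^2S)^{-1}$ that you name is stable in the bulk, because $|1-m_{\mathrm{sc}}^2|$ is bounded below there. The paper uses exactly this fact when bounding $I^{(4)}_{W,M}$.

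Third, in your net argument an $N^{-10}$-grid is too coarse for large $D$. The right-hand side of \eqref{eq_local_law} can be as small as roughly $W^{\varepsilon-1}(N/W)^{-D}$, while the Lipschitz error over spacing $\delta$ is of order $\eta^{-3}\delta$. The spacing must therefore be $W^{-C(D)}$.
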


    \begin{proof}
        This follows directly from (3.17) in Theorem 3.3 and the case (ii) of Proposition 2.8 in \cite{erdos2025zigzagstrategyrandomband}.
    \end{proof}

    \subsection{Density of states}\label{sec_density_of_states}
    In this subsection, we establish some properties of the expected spectral density. These properties complement the estimates in \Cref{sec_a_prioro_estimates}, and finally lead to the proof of asymptotic Poisson statistics stated in \Cref{theorem_Poisson_process}. Specifically, this subsection is devoted to proving the following lemma.

    \begin{lemma}\label{lemma_rho_W_N}
        Suppose $H\equiv H_{W,N}$ is a random band matrix with bandwidth $W\geq 1$ and $N=nW$, defined in \Cref{def_considered_model}. Let $\rho_{W,N}$ be the unique smooth density defined by \eqref{def_rho_W_N} (recall \Cref{lemma_rho_W_N_regularity}). For each fixed $W\geq 1$, there exists a continuous strictly positive probability density $\rho_{W}$ on $\R$, such that $\rho_{W,N}$ converges to $\rho_{W}$ locally uniformly as $N\to \infty$. More specifically, for every compact $K\subseteq \R$, there exist a $K$-dependent small constant $c>0$ and $K$-dependent large constants $C,D\geq 1$, such that
        \begin{equation}\label{eq_rho_W_N_rho_W}
            \sup_{E\in K}\absa{\rho_{W,N}(E)-\rho_{W}(E)}\leq C(NW)^{C}\exp\pa{-cN/W^2},\qquad \forall N\geq DW^2.
        \end{equation}
        
        In the bulk regime, for every compact $K\subseteq (-2,2)$, there exist a $K$-dependent small constant $\delta>0$, and $K$-dependent large constants $W_0, C\geq 1$, such that
        \begin{equation}\label{eq_rho_W_to_rho_sc}
            \sup_{E\in K}\absa{\rho_W(E)-\rho_{\txt{sc}}(E)}\leq CW^{-\delta},\qquad \forall W\geq W_0.
        \end{equation}
        Furthermore, the family $\ha{\rho_W}_{W=1}^{\infty}$ is equicontinuous on $K$, and there exists a large constant $C\equiv C_{K}\geq 1$, such that
        \begin{equation}\label{eq_rho_W_uniform_bounds}
            C^{-1}\leq \inf_{W\geq 1} \inf_{E\in K}\rho_{W}(E)\leq \sup_{W\geq 1} \sup_{E\in K}\rho_{W}(E)\leq C.
        \end{equation}
\end{lemma}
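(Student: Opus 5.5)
The plan has three parts. First, the convergence \eqref{eq_rho_W_N_rho_W} in the localized regime, via fractional moments and regularization at an exponentially small scale. Second, continuity, positivity and uniform bounds. Third, the comparison with $\rho_{\txt{sc}}$ via the local law, which I expect to be the main difficulty.

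\emph{Step 1 (convergence as $N\to\infty$).} Fix $K$ and take $N\ge DW^2$. By \eqref{eq_rho_W_N_Poisson_approximation} and \eqref{eq_Poisson_rho_E_Im_Tr_G}, for any $\eta\in(0,1/2)$,
\begin{equation*}
\rho_{W,N}(E)=\frac{1}{\pi}\E\,\mathrm{Im}\,G_{W,N}(E+\ii\eta)_{00}+O\pb{(NW)^{1/2}\eta\log\eta^{-1}+\eta}.
\end{equation*}
I would compare $G_{W,N}(z)_{00}$ with the resolvent of an open-boundary chain of length $N$ centred at $0$. This chain is obtained by deleting the hopping block diametrically opposite to $0$, at distance $\sim N/2$.

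The resolvent identity writes the difference as a finite sum of terms $G_{0a}H_{ab}G^{\mathrm{cut}}_{b0}$ with $|a|,|b|\gtrsim N/2$. Since every resolvent entry is bounded by $\eta^{-1}$, I use $\E|X|\le \eta^{-(1-q)}\E|X|^q$ together with H\"older's inequality. Combined with \Cref{lemma_fractional_decay_bound_H_E_i_eta} (and its open-boundary analogue obtained from \eqref{eq_fractional_decay_bound_H_o}), this gives an error $\eta^{-C}(NW)^C e^{-cN/W^2}$.

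The same argument compares open chains of lengths $N$ and $N'>N$ centred at the origin. Hence $|\rho_{W,N}-\rho_{W,N'}|\le C\eta^{-C}(N'W)^{C}e^{-cN/W^2}+C(N'W)^{1/2}\eta\log\eta^{-1}$. Choosing $\eta=e^{-c'N/W^2}$ with $c'$ small, and handling $N'\gg N$ by a dyadic telescoping sum, shows that $(\rho_{W,N})_N$ is uniformly Cauchy on $K$. The limit $\rho_W$ satisfies \eqref{eq_rho_W_N_rho_W}.

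\emph{Step 2 (properties of $\rho_W$).} Continuity of $\rho_W$ follows from uniform convergence of continuous functions. Total mass one follows from the uniform tightness $\E\,\cN_{\{|\lambda|>M\}}(H)/N\le \P(\norm{H}>M)\to0$. Strict positivity I would get from the Schur complement
\begin{equation*}
G_{00}=\pb{H_{00}-E-\ii\eta-\Sigma}^{-1}.
\end{equation*}
Averaging over the Gaussian $H_{00}$, whose density is bounded below on compacts, gives $\E\,\mathrm{Im}\,G_{00}\ge c$ on the event that $|\mathrm{Re}\,\Sigma|$ and $\mathrm{Im}\,\Sigma$ are bounded. I would show this event has probability bounded below uniformly in $N$ and $\eta$, using the fractional bound \eqref{eq_fractional_flat_bound} for the minor together with Markov's inequality. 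This step is delicate but standard.

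The upper bound in \eqref{eq_rho_W_uniform_bounds} is inherited from \eqref{eq_regularity_rho_W_N}. For $W\le W_0$ the lower bound on $K$ comes from the positivity argument; for $W\ge W_0$ both bounds follow from \eqref{eq_rho_W_to_rho_sc}. Equicontinuity similarly splits into finitely many $W$ (each $\rho_W$ is continuous) and large $W$, where it follows from \eqref{eq_rho_W_to_rho_sc} plus continuity of $\rho_{\txt{sc}}$. Strictly, that argument would have to be made with modulus rather than just sup-error; I would instead prove a H\"older-type estimate directly from Step 3.

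\emph{Step 3 (comparison with $\rho_{\txt{sc}}$, the main obstacle).} Take $N_1=W^{2-c}$ so that \Cref{lemma_local_law} applies, and set $\eta_1=N_1^{-1+\tau}$. The local law gives $\E\,\mathrm{Im}\,G_{W,N_1}(E+\ii\eta_1)_{00}=\mathrm{Im}\,m_{\txt{sc}}+O(W^{-\delta})$; the contribution of the exceptional event is controlled by \eqref{eq_E_Im_G_xx_bound}. Then \eqref{eq_rho_W_N_Poisson_approximation} gives
\begin{equation*}
|\rho_{W,N_1}-P_{\eta_1}*\rho_{W,N_1}|\lesssim (N_1W)^{1/2}\eta_1\log\eta_1^{-1}\lesssim W^{3/2-c/2}\,W^{-(2-c)(1-\tau)}\log W,
\end{equation*}
which is a negative power of $W$ for small $c,\tau$. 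Therefore $\rho_{W,N_1}\approx\rho_{\txt{sc}}$ on $K$.

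The difficulty is bridging from $N_1<W^2$ to $N\ge DW^2$, where Step 1 applies; neither the local law nor the exponential decay covers the window $N\asymp W^2$. I would attempt a multiscale doubling $N_{k+1}=2N_k$ with $\eta_k=N_k^{-1+\tau}$. At each step I would compare $P_{\eta_k}*\rho_{W,N_k}$ with $P_{\eta_k}*\rho_{W,2N_k}$ through a resolvent comparison, gluing two periodic chains by cutting and reconnecting one hopping block. The point is to use that at scale $\eta_k$ a single block perturbation changes the normalized trace by only $O(N_k^{-1}\eta_k^{-1})$-type amounts. I expect the two comparison lemmas deferred to \Cref{sec_proof_of_lemma_comparison} to play exactly this role. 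The errors $N_k^{-\tau}$ are summable over the logarithmically many steps, and the regularization error at each stage is handled as in the display above. Reaching $N\ge DW^2$ then lets Step 1 transfer the result to $\rho_W$.
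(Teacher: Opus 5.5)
Your Steps 1 and 2 essentially follow the paper, apart from a tightness error noted at the end. Where you cut to an open chain, the paper embeds a periodic sub-chain of length $\approx N/2$, so it needs only the periodic decay bound \eqref{eq_fractional_decay_bound_H_E_i_eta} and translation invariance.

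\textbf{Gap in Step 3.} Step 3 has a genuine gap, and \eqref{eq_rho_W_to_rho_sc}, \eqref{eq_rho_W_uniform_bounds} and the equicontinuity all rest on it. Your per-step bound in the doubling scheme is off by a factor $W$. Gluing two periodic chains of length $N_k$ removes and adds $W\times W$ hopping blocks, which is a perturbation of rank $\asymp W$, not $O(1)$. The rank/interlacing bound therefore only gives a change of order $W/(N_k\eta_k)=W N_k^{-\tau}$ in the normalized trace. Throughout the window $N_k\lesssim W^2\log W$ this is at least $W^{1-2\tau-o(1)}$.

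Retuning $\eta$ does not help. Near $N\asymp W^2$ the rank bound needs $\eta\gg W/N\approx W^{-1}$. The only available regularity, $\norm{\rho_{W,N}'}_\infty\lesssim (NW)^{1/2}$, forces $\eta\ll W^{-3/2}$ there.

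Nor is there spatial decay to exploit for a system of size $N_k$ in the window. The local law requires $W^2\ge N^{1+c}$. The fractional-moment decay $e^{-cN/W^2}$ is $O(1)$ near $N\asymp W^2$, and it comes with $W^C\eta^{-2}$ prefactors. Also, \eqref{eq_rho_W_N_rho_W} only becomes small once $N\gg W^2\log W$, so reaching $DW^2$ is not enough.

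\textbf{Missing idea.} No multiscale bridge is needed. Fix one spectral scale, independent of the system size, with $W\eta^{-1/2}\ll M_0\le W^{2-c}$ and $\eta\ll W^{-3/2}$. The paper takes $\eta=W^{-7/4}$ and $M_0=W\lceil W^{15/16}\rceil$, so $\ell_\eta=W^{15/8}$.

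Then compare $M_0$ with an arbitrary $M_1\ge 2M_0$ in a single jump, using the coupling that embeds a periodic sub-chain $I$ of length $R\approx M_0/2$ into both systems. In the expansion $\sum_{a,b\in\mathsf B(I)}G_{W,M_1}(z)_{\sx a}\,\Delta_{ab}\,\wh G(z)_{b\sx}$, bound the large-system factor trivially by $\eta^{-1}$. Only $\wh G(z)_{b\sx}=G_{W,R}(z)_{b\sx}$ then has to be small. Since $R\le W^{2-c}$, the local law applies to it, and its off-diagonal decay at distance $\ge M_0/100\gg \ell_\eta$ gives an error $W^{-D}$ (\Cref{lemma_rho_M_1_M_0_comparison}). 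So the local law is never used in the window $N\asymp W^2$.

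Finally take $M_1=M=W\lceil W^{5/4}\rceil$. The regularization error is $(MW)^{1/2}\eta\log\eta^{-1}\lesssim W^{-1/8}\log W$. Since $M/W^2\ge W^{1/4}$, \eqref{eq_rho_W_N_rho_W} gives $|\rho_W-\rho_{W,M}|\le C(MW)^Ce^{-cW^{1/4}}$.

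\textbf{Two smaller points.} Your tightness argument fails. For fixed $W$, with high probability $\norm{H}\ge\max_x|H_{xx}|\gtrsim\sqrt{\log N/W}$, so $\P(\norm{H}>M)\to 1$ as $N\to\infty$. Use $\int E^2\rho_{W,N}(E)\,\rd E=N^{-1}\E\Tr H^2=1$ instead.

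Your first equicontinuity argument was already correct. A sequence of continuous functions converging uniformly on a compact set is equicontinuous, so uniform closeness to $\rho_{\mathrm{sc}}$ plus finitely many small $W$ suffices, and no H\"older estimate is needed.
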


    \begin{proof}
        First, for the estimate \eqref{eq_rho_W_N_rho_W}, we claim the following comparison estimate, whose proof relies on a coupling argument and is postponed to \Cref{sec_proof_of_lemma_comparison}.
        \begin{lemma}\label{lemma_N_N_comparison}
            Under the assumptions of \Cref{lemma_rho_W_N}, fix any compact $K\subseteq \R$. Then, there exist a small constant $c\equiv c_{K}>0$ and a large constant $C\equiv C_{K}\geq 1$, such that, for any $N\leq N'\leq 2N$, the following estimate holds uniformly in $E\in K$ and $\eta\in(0,1]$:
            \begin{equation}\label{eq_Tr_G_comparison}
                \absa{\frac{1}{N}\E\Tr\, G_{W,N}(E+\ii \eta)-\frac{1}{N'}\E\Tr\, G_{W,N'}(E+\ii \eta)}\leq CW^{C}\eta^{-2}\exp(-cN/W^2).
            \end{equation}
            Moreover, for the expected spectral density, there exists a large constant $D\equiv D_{K}\geq 1$, such that, for any $N\leq N'\leq 2N$,
            \begin{equation}\label{eq_rho_W_N_rho_W_N_comparison}
                \sup_{E\in K}\absa{\rho_{W,N}(E)-\rho_{W,N'}(E)}\leq C(NW)^C\exp(-cN/W^2),\qquad \forall N\geq DW^2.
            \end{equation}
            Here, $c\equiv c_{K}\in(0,1)$ and $C\equiv C_{K}\geq 1$ are again $K$-dependent constants, as in \eqref{eq_Tr_G_comparison}.
        \end{lemma}
        With \Cref{lemma_N_N_comparison}, we can immediately obtain the convergence of $\rho_{W,N}$, i.e., the bound \eqref{eq_rho_W_N_rho_W}. Indeed, for any $N'\geq N\geq DW^2$, there exists some $k\in\Z_+$, such that
        \begin{equation}
            2^{k-1}N\leq N'\leq 2^k N.
        \end{equation}
        We denote $\norm{f}_{K}=\sup_{E\in K}|f(E)|$ and write
        \begin{equation*}
            \sup_{E\in K}\absa{\rho_{W,N}(E)-\rho_{W,N'}(E)}=\norma{(\rho_{W,N}-\rho_{W,2N})+\cdots+(\rho_{W,2^{k-2}N}-\rho_{W,2^{k-1}N})+(\rho_{W,2^{k-1}N}-\rho_{W,N'})}_{K}.
        \end{equation*}
        Then, applying the triangle inequality and the bound \eqref{eq_rho_W_N_rho_W_N_comparison}, we obtain
        \begin{equation}
            \begin{aligned}
                \sup_{E\in K}\absa{\rho_{W,N}(E)-\rho_{W,N'}(E)}\leq& C\sum_{k=1}^{\infty}(2^{k-1}NW)^{C}\exp(-c\cdot 2^{k-1}N/W^2)\\
            \leq& C'(NW)^{C'}\exp(-c'N/W^2),
            \end{aligned}
        \end{equation}
        which shows that, for any $W\geq 1$, the sequence $\h{\rho_{W,N}}_{N/W=1}^{\infty}$ is Cauchy, with respect to the norm $\|\cdot\|_{K}$. Therefore, for every $W\geq 1$, there exists a non-negative continuous function $\rho_W$ on $\R$, such that the sequence $\h{\rho_{W,N}}_{N/W=1}^{\infty}$ converges locally uniformly to $\rho_W$, and the bound \eqref{eq_rho_W_N_rho_W} holds. Moreover, the fact that, for every $W$,
        \begin{equation}
            \int_{\R}E^2\rho_{W,N}(E)\, \rd E=\frac{1}{N}\E\Tr\, H_{W,N}^2=1,\qquad \forall N,W,
        \end{equation}
        shows that the family $\h{\rho_{W,N}}_{N,W}$ is uniformly tight. Therefore, together with the locally uniform convergence, this implies that $\rho_{W}$ is a probability density. Next, we prove that $\rho_{W}$ is strictly positive on $\R$. To this end, given any $x\in\ZN$, applying the Schur complement at $H_{xx}$, we can obtain the identity
        \begin{equation}\label{eq_rho_W_N_E_p_W}
            \rho_{W,N}(E)=\E\, p_W\qbb{E+\sum_{i,j\in\ZN\setminus\ha{x}}H_{xi}\q{(H^{(x)}-E)^{-1}}_{ij}H_{jx}},\qquad \forall E\in\R.
        \end{equation}
        Here, $p_W(u):=\sqrt{3W/2\pi}\exp(-3Wu^2/2)$ is the probability density of $H_{xx}$, and $H^{(x)}$ is the minor obtained by deleting the $x$-th row and column. To see \eqref{eq_rho_W_N_E_p_W}, for any $\eta>0$, denoting $z=E+\ii \eta$, by the Schur complement, we have
        \begin{equation}
            \mathrm{Im}\, G_{W,N}(E+\ii \eta)_{xx}=\frac{\eta+\beta_{\eta}}{\p{H_{xx}-E-\alpha_{\eta}}^2+\p{\eta+\beta_{\eta}}^2},
        \end{equation}
        where the real random variables $\alpha_{\eta}\equiv \alpha_{\eta}(x,H)$ and $\beta_{\eta}\equiv \beta_{\eta}(x,H)$ are defined by
        \begin{equation}
            \alpha_{\eta}+\ii \beta_{\eta}:=\sum_{i,j\in\ZN\setminus\ha{x}}H_{xi} \q{(H^{(x)}-E-\ii \eta)^{-1}}_{ij}H_{jx}.
        \end{equation}
        Taking expectation with respect to $H_{xx}$, we obtain
        \begin{equation}
            \begin{aligned}
                \E_{H_{xx}}\qa{\mathrm{Im}\, G_{W,N}(z)}_{xx}=&\int_{\R}\frac{\eta+\beta_{\eta}}{\p{u-E-\alpha_{\eta}}^2+\p{\eta+\beta_{\eta}}^2}p_W(u)\, \rd u\\
                =&\int_{\R}\frac{1}{1+u^2}p_W\qa{E+\alpha_{\eta}+(\eta+\beta_{\eta})u}\, \rd u,
            \end{aligned}
        \end{equation}
        which immediately yields
        \begin{equation}
            \begin{aligned}
                \pi P_{\eta}*\rho_{W,N}(E)=&\frac{1}{N}\E\Tr\,\mathrm{Im}\, G_{W,N}(E+\ii \eta)=\E\mathrm{Im}\, G_{W,N}(z)_{xx}=\E\, \E_{H_{xx}}\qa{\mathrm{Im}\, G_{W,N}(z)}_{xx}\\
                =&\E\int_{\R}\frac{1}{1+u^2}p_W\qa{E+\alpha_{\eta}+(\eta+\beta_{\eta})u}\, \rd u,
            \end{aligned}
        \end{equation}
        where we also used \eqref{eq_xx_to_1_N_Tr} in the second step. Letting $\eta\to 0$ on both sides, by Lebesgue's dominated convergence theorem, we obtain
        \begin{equation}
            \begin{aligned}
                \pi \rho_{W,N}(E)=&\E\, p_W\qbb{E+\sum_{i,j\in\ZN\setminus\ha{x}}H_{xi}\qb{(H^{(x)}-E)^{-1}}_{ij}H_{jx}}\int_{\R}\frac{1}{1+u^2}\, \rd u\\
                =&\pi\E\, p_W\qbb{E+\sum_{i,j\in\ZN\setminus\ha{x}}H_{xi}\qb{(H^{(x)}-E)^{-1}}_{ij}H_{jx}}.
            \end{aligned}
        \end{equation}
        Moreover, the fractional moment bound \eqref{eq_fractional_flat_bound} shows, for any fixed $s\in(0,1)$,
        \begin{equation}
            \begin{aligned}
                \E\absBB{\sum_{i,j\in\ZN\setminus\ha{x}}H_{xi}\qb{(H^{(x)}-E)^{-1}}_{ij}H_{jx}}^s&\leq \sum_{i,j\in\ZN\setminus\ha{x}}\E |H_{xi}|^s|H_{jx}|^s\E\absa{\qb{(H^{(x)}-E)^{-1}}_{ij}}^s\\
                &\leq C_sW^{s/2}\sum_{i,j\in\ZN\setminus\ha{x}}\E|H_{xi}|^s|H_{jx}|^s\leq C_s'W^{2-s/2}.
            \end{aligned}
        \end{equation}
        Here, we also used the independence of entries of $H$. 
        Therefore, by Markov's inequality, for fixed $W$, there exists a large constant $C\geq 1$, such that the random variable $\sum_{i,j\in\ZN\setminus\ha{x}}H_{xi}\qb{(H^{(x)}-E)^{-1}}_{ij}H_{jx}$ is bounded by $CW^{C}$ with probability at least $1/2$. For fixed $W\geq 1$, combining this fact with \eqref{eq_rho_W_N_E_p_W} yields
        \begin{equation}
            \rho_{W,N}(E)\geq \frac{1}{2}\inf_{|u-E|\leq CW^{C}}p_W(u),
        \end{equation}
        which further yields the strict positivity of $\rho_{W}(E)$ by taking the limit $N\to \infty$ on both sides.

        Next, we prove the remainder of \Cref{lemma_rho_W_N}. In fact, it suffices to prove the estimate \eqref{eq_rho_W_to_rho_sc}. Once we have \eqref{eq_rho_W_to_rho_sc}, the Arzel\`a-Ascoli theorem will immediately imply the equicontinuity, and the lower and upper bounds of $\rho_{\txt{sc}}$ on $K$ will yield
        \begin{equation}\label{eq_rho_W_uniform_bounds_W_0}
            C^{-1}\leq \inf_{W\geq W_0} \inf_{E\in K}\rho_{W}(E)\leq \sup_{W\geq W_0} \sup_{E\in K}\rho_{W}(E)\leq C
        \end{equation}
        for some $K$-dependent large constants $W_0,C\geq 1$. Since $\rho_W$ is strictly positive and continuous, the bound \eqref{eq_rho_W_uniform_bounds_W_0} immediately gives the bound \eqref{eq_rho_W_uniform_bounds}. Therefore, for the rest of this proof, we fix a compact $K\subseteq (-2,2)$ and consider the difference $\rho_{W}(E)-\rho_{\txt{sc}}(E)$. To this end, for any $\eta\in(0,1/2)$ and any positive multiple $M$ of $W$, we write
        \begin{equation}
            \begin{aligned}
                &\absa{\rho_W(E)-\rho_{\txt{sc}}(E)}\leq \absa{\rho_W(E)-\rho_{W,M}(E)}+\absa{\rho_{W,M}(E)-P_{\eta}*\rho_{W,M}(E)}\\
                &+\absa{P_{\eta}*\rho_{W,M}(E)-\pi^{-1}\mathrm{Im}\,m_{\txt{sc}}(E+\ii \eta)}+\absa{\pi^{-1}\mathrm{Im}\,m_{\txt{sc}}(E+\ii \eta)-\rho_{\txt{sc}}(E)}=:\sum_{k=1}^{4}\absa{I_{W,M}^{(k)}(\eta,E)}.
            \end{aligned}
        \end{equation}
        To bound these four terms, we take the parameters
        \begin{equation}
            \eta=W^{-7/4},\qquad M=W\ceil{W^{5/4}}.
        \end{equation}
        Then, for sufficiently large $W$, by \eqref{eq_rho_W_N_rho_W}, the first term $I_{W,M}^{(1)}(\eta,E)$ is bounded as
        \begin{equation}\label{eq_bound_I_W_M_1}
            \sup_{E\in K}\absa{I_{W,M}^{(1)}(\eta,E)}=\sup_{E\in K}\absa{\rho_W(E)-\rho_{W,M}(E)}\leq C(MW)^{C}\exp\pa{-cM/W^2}\leq C_{K}W^{-2}.
        \end{equation}
        For the second term $I_{W,M}^{(2)}(\eta,E)$, applying \eqref{eq_rho_W_N_Poisson_approximation}, we obtain, for sufficiently large $W$,
        \begin{equation}\label{eq_bound_I_W_M_2}
            \sup_{E\in K}\absa{I_{W,M}^{(2)}(\eta,E)}\leq C(MW)^{1/2}\eta\log\eta^{-1}+C\eta\leq C_{K}W^{-1/9}.
        \end{equation}
        For the fourth term $I_{W,M}^{(4)}(\eta,E)$, since $\rho_{\txt{sc}}(E)=\pi^{-1}\mathrm{Im}\, m_{\txt{sc}}(E+\ii 0)$, the Newton-Leibniz formula yields
        \begin{equation}\label{eq_I_4_Newton_Leibniz}
            I_{W,M}^{(4)}(\eta,E)=\frac{1}{\pi}\mathrm{Im}\,\int_{0}^{\eta}\frac{\rd}{\rd u}m_{\txt{sc}}(E+\ii u)\, \rd u=\frac{1}{\pi}\mathrm{Re}\,\int_{0}^{\eta} \frac{m_{\txt{sc}}(E+\ii u)^2}{1-m_{\txt{sc}}(E+\ii u)^2}\, \rd u.
        \end{equation}
        By standard calculus, there exists a small constant $c_{K}$, such that
        \begin{equation}
            \inf_{E\in K}\inf_{\eta\in(0,1)}\absa{1-m_{\txt{sc}}(E+\ii\eta)^2}\geq c_{K}.
        \end{equation}
        Substituting this back into \eqref{eq_I_4_Newton_Leibniz}, we then obtain, for every sufficiently large $W$ and some large constant $C_{K}\geq 1$,
        \begin{equation}\label{eq_bound_I_W_M_4}
            \sup_{E\in K}\absa{I_{W,M}^{(4)}(\eta,E)}\leq C_{K}\eta= C_{K}W^{-7/4}.
        \end{equation}
        It remains to bound the third term $I_{W,M}^{(3)}(\eta,E)$ by the local law in \Cref{lemma_local_law}. However, since $M\asymp W^{9/4}\geq W^2$ for large $W$, \Cref{lemma_local_law} does not directly provide the required control of $I_{W,M}^{(3)}(\eta,E)$ at this scale. Therefore, we introduce the intermediate scale $M_0=W\ceil{W^{15/16}}$ and further write
        \begin{equation}
            I_{W,M}^{(3)}(\eta,E)=I_{W,M}^{(3,1)}(\eta,E)+I_{W,M}^{(3,2)}(\eta,E),
        \end{equation}
        where we denote
        \begin{equation}
            I_{W,M}^{(3,1)}(\eta,E):=P_{\eta}*\rho_{W,M}(E)-P_{\eta}*\rho_{W,M_0}(E),\quad I_{W,M}^{(3,2)}(\eta,E):=P_{\eta}*\rho_{W,M_0}(E)-\pi^{-1}\mathrm{Im}\,m_{\txt{sc}}(E+\ii \eta).
        \end{equation}
        For sufficiently large $W$, the term $I_{W,M}^{(3,2)}(\eta,E)$ can be directly bounded by the local law \eqref{eq_local_law} as
        \begin{equation}\label{eq_bound_I_W_M_3_2}
            \pi \absa{I_{W,M}^{(3,2)}(\eta,E)}=\frac{1}{M_0}\absa{\E\mathrm{Im}\,\Tr\, \qa{G_{W,M_0}(E+\ii \eta)-m_{\txt{sc}}(E+\ii \eta)}}\leq \frac{CW^{1/64}}{\sqrt{\ell_{\eta}\eta}}+\frac{C}{\eta}W^{-100}\leq CW^{-1/32}.
        \end{equation}
        Here, we take $D'=100$ in \eqref{eq_local_law}, and estimate $G_{W,M_0}(E+\ii \eta)$ and $m_{\txt{sc}}(E+\ii \eta)$ by the trivial bound $\eta^{-1}$ on the bad event. Finally, for the term $I_{W,M}^{(3,1)}(\eta,E)$, in fact, we have the following stronger comparison estimate.
        \begin{lemma}\label{lemma_rho_M_1_M_0_comparison}
            Under the assumptions of \Cref{lemma_rho_W_N}, adopt the notations in this proof. Then, for any fixed large constant $D\geq 1$, there exists a large constant $C\equiv C_{K,D}\geq 1$, such that
            \begin{equation}\label{eq_rho_M_1_rho_M_0_comparison}
                \sup_{M_1\in W\Z_+: M_1\geq 2M_0}\sup_{E\in K}\absa{P_\eta * \rho_{W,M_1}(E)-P_\eta * \rho_{W,M_0}(E)}\leq CW^{-D}.
            \end{equation}
        \end{lemma}
        The proof of this lemma relies on a coupling argument similar to that for \Cref{lemma_N_N_comparison}, and is therefore also postponed to \Cref{sec_proof_of_lemma_comparison}. 
        With \Cref{lemma_rho_M_1_M_0_comparison}, taking $M_1=M$ in \eqref{eq_rho_M_1_rho_M_0_comparison} and combining the estimates \eqref{eq_bound_I_W_M_1}, \eqref{eq_bound_I_W_M_2}, \eqref{eq_bound_I_W_M_4}, and \eqref{eq_bound_I_W_M_3_2}, we obtain the desired estimate \eqref{eq_rho_W_to_rho_sc}. This completes the proof of \Cref{lemma_rho_W_N}.
    \end{proof}

    \subsection{Proof of \texorpdfstring{Lemmas \ref{lemma_N_N_comparison} and \ref{lemma_rho_M_1_M_0_comparison}}{comparison estimates}}\label{sec_proof_of_lemma_comparison}
    We begin with the proof of \Cref{lemma_N_N_comparison}.
    \begin{proof}[Proof of \Cref{lemma_N_N_comparison}]
        We first show how the bound \eqref{eq_rho_W_N_rho_W_N_comparison} can be derived from the bound \eqref{eq_Tr_G_comparison}. Given any fixed sufficiently small constant $c_0\in(0,1)$, taking $\eta=\exp(-c_0N/W^2)$ in \eqref{eq_Tr_G_comparison}, and applying the approximation \eqref{eq_rho_W_N_Poisson_approximation}, we obtain uniformly in $E\in K$
        \begin{equation}
            \begin{aligned}
                &\absa{\rho_{W,N}(E)-\rho_{W,N'}(E)}\\
                \leq& \absa{(\pi N)^{-1}\E\Tr\, \mathrm{Im}\, G_{W,N}(E+\ii \eta)-\rho_{W,N}(E)}+\absa{(\pi N')^{-1}\E\Tr\, \mathrm{Im}\, G_{W,N'}(E+\ii \eta)-\rho_{W,N'}(E)}\\
                &+\absa{(\pi N)^{-1}\E\Tr\, \mathrm{Im}\, G_{W,N}(E+\ii \eta)-(\pi N')^{-1}\E\Tr\, \mathrm{Im}\, G_{W,N'}(E+\ii \eta)}\\
                \leq& C(NW)^{1/2}\eta\log\eta^{-1}+C\eta+C(NW)^C\exp(-cN/W^2)\leq C'(NW)^{C'}\exp(-c'N/W^2).
            \end{aligned}
        \end{equation}
        Here, we take the constant $D$ in \eqref{eq_rho_W_N_rho_W_N_comparison} to be large enough to ensure $\eta\leq \exp(-c_0 D)< 1/2$, so that \eqref{eq_rho_W_N_Poisson_approximation} can be applied properly. 
        It remains to prove \eqref{eq_Tr_G_comparison}. In the remainder of this proof, we fix $E\in K$, $\eta\in(0,1]$, and denote $R=2\ceil{N/4W}W$. Then, clearly, it suffices to prove
        \begin{equation}\label{eq_Tr_G_N_M_comparison}
            \begin{aligned}
                \absa{\frac{1}{N}\E\Tr\, G_{W,N}(E+\ii \eta)-\frac{1}{R}\E\Tr\, G_{W,R}(E+\ii \eta)}\leq& CW^{C}\eta^{-2}\exp(-cN/W^2),\\
                \absa{\frac{1}{N'}\E\Tr\, G_{W,N'}(E+\ii \eta)-\frac{1}{R}\E\Tr\, G_{W,R}(E+\ii \eta)}\leq& CW^{C}\eta^{-2}\exp(-cN/W^2),
            \end{aligned}
        \end{equation}
We only prove the first bound here, since the proof of the second one is almost the same, with only slight notational modifications. Let $I\equiv I_{W,N}\subseteq \ZN$ be any set of indices consisting of $2\ceil{N/4W}$ successive blocks, and $\sx$ be the center of $I$, i.e., the $\ceil{N/4W}W$-th element in $I$. Denote the boundary blocks of $I$ and its complement by $\qa{x_1},\qa{x_2}$ and $\qa{y_1},\qa{y_2}$, respectively, where the blocks $\qa{x_i}$ and $\qa{y_i}$ are adjacent for $i=1,2$. Then, we have
        \begin{equation}\label{eq_dist_sx_boundary}
            \mathrm{dist}\pa{\sx,\qa{x_1}\cup\qa{x_2}\cup\qa{y_1}\cup\qa{y_2}}\geq N/100.
        \end{equation}
        We also denote the restrictions of $H$ to $I$ and its complement as $H_{I}^{\mathrm{o}}$ and $H_{I^{\mathrm{c}}}^{\mathrm{o}}$, respectively, where the superscript $\mathrm{o}$ means both these two matrices are of open-boundary form. Define
        \begin{equation}
            \wh H_{W,N}:=H_{I}\oplus H_{I^{\mathrm{c}}},
        \end{equation}
        where $H_{I}$ is the periodic form of $H_{I}^{\mathrm{o}}$, obtained by completing the corner blocks using the independent copies of the off-diagonal blocks of $H$, and $H_{I^{\txt{c}}}$ is defined similarly. Clearly, the entry $(H_{W,N}-\wh H_{W,N})_{ab}$ vanishes, unless $a,b\in\qa{x_1}\cup\qa{x_2}\cup\qa{y_1}\cup\qa{y_2}$. For any $z\in \C$, denote $\wh G(z):=(\wh H-z)^{-1}$ and regard $G_{W,R}(z)$ as $(H_{I}-z)^{-1}$. Then, the identity \eqref{eq_xx_to_1_N_Tr} shows
        \begin{equation}
            \frac{1}{R}\E\Tr\, G_{W,R}(E+\ii \eta)=\E \qb{(H_{I}-E-\ii\eta)^{-1}}_{\sx\sx}=\E \qb{(\wh H-E-\ii\eta)^{-1}}_{\sx\sx}=\E \wh G_{W,N}(E+\ii \eta)_{\sx\sx}.
        \end{equation}
        Therefore, for any $s\in(0,1/2)$, we have
\begin{align}
                &\absa{\frac{1}{N}\E\Tr\, G_{W,N}(E+\ii \eta)-\frac{1}{R}\E\Tr\, G_{W,R}(E+\ii \eta)}=\absa{\E G_{W,N}(E+\ii \eta)_{\sx\sx}-\E \wh G_{W,N}(E+\ii \eta)_{\sx\sx}}\nonumber\\
                &\qquad\qquad\qquad\qquad\leq \sum_{a,b\in\qa{x_1}\cup\qa{x_2}\cup\qa{y_1}\cup\qa{y_2}}\E\abs{G_{W,N}(E+\ii \eta)_{\sx a}}(|H_{ab}|+|\wh H_{ab}|)\abs{\wh G_{W,N}(E+\ii \eta)_{b\sx}}\label{eq_coupling_difference}\\
                &\qquad\qquad\qquad\qquad\leq \eta^{s-2} \sum_{a,b\in\qa{x_1}\cup\qa{x_2}\cup\qa{y_1}\cup\qa{y_2}}\E\abs{G_{W,N}(E+\ii \eta)_{\sx a}}^{s}(|H_{ab}|+|\wh H_{ab}|)\nonumber\\
                &\qquad\qquad\qquad\qquad\leq 2 \eta^{-2} \sum_{a,b\in\qa{x_1}\cup\qa{x_2}\cup\qa{y_1}\cup\qa{y_2}}\pb{\E\abs{G_{W,N}(E+\ii \eta)_{\sx a}}^{2s}}^{1/2}\pb{\E|H_{ab}|^2+\E|\wh H_{ab}|^2}^{1/2}.\nonumber
            \end{align}
Taking $s=q/2$ with $q$ given in \eqref{eq_fractional_decay_bound_H_E_i_eta}, using the bound \eqref{eq_dist_sx_boundary}, and the fact $\E|H_{ab}|^2\leq W^{-1}$, we obtain
        \begin{equation}
            \begin{aligned}
                &\absa{\frac{1}{N}\E\Tr\, G_{W,N}(E+\ii \eta)-\frac{1}{R}\E\Tr\, G_{W,R}(E+\ii \eta)}\\
            \leq& CW^{-1/2}\eta^{-2}\sum_{a,b\in\qa{x_1}\cup\qa{x_2}\cup\qa{y_1}\cup\qa{y_2}}\pb{\E\abs{G_{W,N}(E+\ii \eta)_{\sx a}}^{2s}}^{1/2}\leq CW^{C}\eta^{-2}\exp(-cN/W^2).
            \end{aligned}
        \end{equation}
        This proves the first estimate \eqref{eq_Tr_G_N_M_comparison} and completes the proof of \Cref{lemma_N_N_comparison}.
    \end{proof}

    Next, we prove \Cref{lemma_rho_M_1_M_0_comparison} by explaining how to modify the proof of \Cref{lemma_N_N_comparison}. The key idea is that, in the analogue of \eqref{eq_coupling_difference}, the local law in \Cref{lemma_local_law} allows us to obtain much sharper spatial decay than the fractional moment estimate \eqref{eq_fractional_decay_bound_H_E_i_eta}.

    \begin{proof}
        \Cref{lemma_rho_M_1_M_0_comparison} follows from a coupling argument analogous to that in the proof of \Cref{lemma_N_N_comparison}. Let $M_0$ and $M_1$ take the place of $N$ and $N'$ in the proof of \Cref{lemma_N_N_comparison}. In particular, we have $R=2\ceil{M_0/4W}W$. Then, the bound \eqref{eq_dist_sx_boundary} becomes
        \begin{equation}\label{eq_dist_sx_boundary_M_0}
            \mathrm{dist}\pa{\sx,\qa{x_1}\cup\qa{x_2}\cup\qa{y_1}\cup\qa{y_2}}\geq M_0/100,
        \end{equation}
        and we can prove the following analogue of (the second step of) \eqref{eq_coupling_difference} for each $i=0,1$:
\begin{align}
                &\absa{\frac{1}{M_i}\E\Tr\, G_{W,M_i}(E+\ii \eta)-\frac{1}{R}\E\Tr\, G_{W,R}(E+\ii \eta)}\nonumber\\
                \leq& \sum_{a,b\in\mathsf{B}(I)}\E\abs{G_{W,M_i}(E+\ii \eta)_{\sx a}}(|H_{ab}|+|\wh H_{ab}|)\abs{\wh G_{W,M_i}(E+\ii \eta)_{b\sx}}\label{eq_coupling_difference_M_0}\\
                \leq &\eta^{-1} \sum_{a\in \mathsf{B}(I)}\sum_{b\in \pa{\mathsf{B}(I)}\cap I}\E(|H_{ab}|+|\wh H_{ab}|)\abs{G_{W,R}(E+\ii \eta)_{b\sx}}\leq C W^{C} \sum_{a\in\mathsf{B}(I)}\sum_{b\in\mathsf{B}(I)\cap I}\pb{\E\abs{G_{W,R}(E+\ii \eta)_{b\sx}}^{2}}^{1/2}.\nonumber
            \end{align}
Here, we denote $\mathsf{B}(I):=\qa{x_1}\cup\qa{x_2}\cup\qa{y_1}\cup\qa{y_2}$, and, in the second step, we used the fact that $\wh G_{W,M_i}(E+\ii \eta)_{b\sx}=0$ for every $b\notin I$ and $\wh G_{W,M_i}(E+\ii \eta)_{b\sx}=G_{W,R}(E+\ii\eta)_{b\sx}$ for every $b\in I$. We also recall the fact $\eta=W^{-7/4}$ here. Fixing any large constant $D'\geq 1$, for sufficiently large $W$, by \eqref{eq_dist_sx_boundary_M_0} and the local law \eqref{eq_local_law}, with probability at least $1-W^{-D'}$, we can bound $\absa{G_{W,R}(E+\ii\eta)_{b\sx}}$ as
        \begin{equation}
            \sup_{b\in \mathsf{B}(I)\cap I}\absa{G_{W,R}(E+\ii\eta)_{b\sx}}\leq\pa{\frac{M_0}{\ell_{\eta}}+1}^{-D_0}\leq CW^{-D_0/16},
        \end{equation}
        where $D_0\geq 1$ may be chosen arbitrarily large. Finally, given any fixed large constant $D\geq 1$, taking $D_0$ to be large enough, substituting this back into \eqref{eq_coupling_difference_M_0}, and using the trivial bound $\|G_{W,M_i}(E+\ii \eta)\|\vee\|G_{W,R}(E+\ii \eta)\|\leq \eta^{-1}$, we obtain
        \begin{equation}
            \absa{\frac{1}{M_i}\E\Tr\, G_{W,M_i}(E+\ii \eta)-\frac{1}{R}\E\Tr\, G_{W,R}(E+\ii \eta)}\leq CW^{-D}.
        \end{equation}
        Since all these estimates are uniform in $M_1\geq 2M_0$ and $E\in K$, this completes the proof of \Cref{lemma_rho_M_1_M_0_comparison} by taking the supremum over $M_1\geq 2M_0$ and $E\in K$.
    \end{proof}

    \section{Proof of \texorpdfstring{\Cref{theorem_Poisson_process}}{Theorem 1.2}}\label{sec_proof_of_main_result}
    This section is devoted to the proof of \Cref{theorem_Poisson_process} for the periodic model $H\equiv H_{W,N}$. With all these preparations, the argument here is essentially a direct extension of the following classical idea from the literature on random Schr{\" o}dinger operators (see, for example, \cite{Minami1996Localfluctuation,GerminetKloop2014Spectralstatistics}): \emph{localization, together with appropriate control of the density of states, leads to Poisson statistics.}
To summarize the basic ideas, heuristically, we first prove the asymptotic Poisson statistics for the random matrix obtained from $H$ by partitioning $\ZN$ into smaller boxes and removing the  hopping blocks between them, and then show the asymptotic Poisson statistics by a comparison between the new matrix and $H$. For example, in the extreme case, if we partition $\ZN$ as
    \begin{equation}
        \ZN:=\bigsqcup_{\ell=0}^{n-1}\qa{\ell W},
    \end{equation}
    the hopping blocks of $H$ will be all removed, and the matrix then becomes simply a block diagonal matrix, in which the asymptotic Poisson statistics can be easily proved. Therefore, in the first step, we want the partition to be sufficiently fine. On the other hand, clearly, as a trade-off, finer partitions require more elimination of hopping blocks, which increases the difficulty of the comparison step. Therefore, the key to the proof is to handle the balance between these two issues.

    \begin{proof}
        We first prove the convergence \eqref{eq_xi_to_PPP_d_lambda}. Throughout this proof, we fix an energy $E\in(-2,2)$, abbreviate $\rho\equiv \rho_W(E)$, and set
        \begin{equation}
            L\equiv L_{W,N}:=W\sqrt{N\log N}.
        \end{equation}
        By the assumption \eqref{eq_localization_assumption}, this intermediate scale parameter satisfies the following two crucial properties, which efficiently allow us to obtain the asymptotic Poisson statistics:
        \begin{equation}
            \lim_{N\to\infty}L/N=0,\qquad \lim_{N\to \infty} L/(W^2\log N)=\infty.
        \end{equation}
        To obtain the Poisson statistics, we partition the discrete circle $\ZN$ into $r\equiv r_{W,N}$ successive boxes $\ha{I_{k}}_{k=1}^{r}$, each of which consists of some successive blocks. Moreover, this partition satisfies the following conditions:
        \begin{equation}
            L\leq N_{k}\leq 2L,\qquad \sum_{k=1}^{r}N_{k}=N,
        \end{equation}
        where $N_k:=|I_{k}|$ is the number of vertices in the $k$-th box. Consequently, the number $r$ of boxes satisfies
        \begin{equation}
            N/(2L)\leq r\leq N/L,
        \end{equation}
        which, in particular, implies $r\to \infty$. 
        Next, suppose that, for any $k\in\qq{r}$, $H_{I_k}^{\mathrm{o}}$ is the restriction of $H$ on the box $I_k$, and $H_{I_k}$ is the corresponding periodic form obtained by completing the corner blocks using independent copies of the off-diagonal blocks of $H$. Also, we define
        \begin{equation}
            \wh H\equiv\wh H_{W,N}:=\bigoplus_{k=1}^{r}H_{I_k},\qquad \wh G(z)\equiv \wh G_{W,N}(z):=(\wh H-z)^{-1}.
        \end{equation}
        Moreover, for any $k\in\qq{r}$, we define
        \begin{equation}
            \xi_{E,k}\equiv \xi_{E,k,W,N}:=\sum_{i=1}^{N_k}\delta_{N\rho\p{\lambda_i^{(k)}-E}},
        \end{equation}
        where $\lambda_1^{(k)}\leq \cdots \leq \lambda_{N_{k}}^{(k)}$ are the eigenvalues of $H_{I_k}$. Then, clearly, the local statistics $\wh \xi_{E}\equiv \wh \xi_{E,W,N}$ associated with $\wh H$ satisfy
        \begin{equation}\label{eq_wh_xi_E_wh_xi_E_k}
            \wh \xi_{E}=\sum_{k=1}^{r}\xi_{E,k}.
        \end{equation}
        We claim that the convergence \eqref{eq_xi_to_PPP_d_lambda} holds with $\xi_E$ replaced by $\wh \xi_{E}$, i.e., for every $f\in C_{c}(\R)$ with $f\geq 0$, we have
        \begin{equation}\label{eq_wh_xi_to_PPP_d_lambda}
            \lim_{N\to\infty} \E\exp\pa{-\int_{\R}f\, \rd \wh\xi_{E}}=\exp\pa{-\int_{\R}\qa{1-\exp\pa{-f(\lambda)}}\, \rd \lambda}.
        \end{equation}
        By the independence of $\ha{H_{I_k}}_{k=1}^{r}$ and the identity \eqref{eq_wh_xi_E_wh_xi_E_k}, the expectation on the left-hand side of \eqref{eq_wh_xi_to_PPP_d_lambda} can be written as
        \begin{equation}\label{eq_E_f_wh_xi_independent_product}
            \E\exp\pa{-\int_{\R}f\, \rd \wh\xi_{E}}=\prod_{k=1}^{r}\E\exp\pa{-\int_{\R}f\, \rd \xi_{E,k}}.
        \end{equation}
        Suppose that $f\in C_{c}(\R)$ with $f\geq 0$ is supported in a finite interval $J\subseteq \R$. 
        For each $k\in\qq{r}$, we claim the bound
        \begin{equation}\label{eq_inclusion_exclusion_bound}
            1-\int_{\R}[1-\exp(-f)]\, \rd \xi_{E,k}\leq \exp\pa{-\int_{\R}f\, \rd \xi_{E,k}}\leq \frac{Q_k(Q_k-1)}{2}+ 1-\int_{\R}[1-\exp(-f)]\, \rd \xi_{E,k},
        \end{equation}
        where $Q_k:=\int_{J}\rd \xi_{E,k}=N_{E+J/N\rho}(H_{I_k})$ denotes the number of eigenvalues of $H_{I_k}$ in the interval $E+J/N\rho$. 
        Specifically, denote
        \begin{equation}
            q_i:=1-\exp[-f(N\rho(\lambda_i^{(k)}-E))]\in[0,1],\qquad \forall i\in\qq{N_k}.
        \end{equation}
        Then, by the inclusion-exclusion bounds, we have
        \begin{equation}
            \begin{aligned}
                &0\leq \int_{\R}[1-\exp(-f)]\, \rd \xi_{E,k}+\exp\pa{-\int_{\R}f\, \rd \xi_{E,k}}-1\\
                &\qquad\qquad\qquad=\sum_{i=1}^{N_k}q_i-1+\prod_{i=1}^{N_k}(1-q_i)\leq \sum_{1\leq i<j\leq N_k}q_iq_j\leq \frac{Q_k(Q_k-1)}{2}.
            \end{aligned}
        \end{equation}
        Taking expectations on both sides of \eqref{eq_inclusion_exclusion_bound} and substituting it back into \eqref{eq_E_f_wh_xi_independent_product}, we obtain
        \begin{equation}\label{eq_E_f_wh_xi_E_Q_k_product}
            \E\exp\pa{-\int_{\R}f\, \rd \wh\xi_{E}}=\prod_{k=1}^{r}\pa{R_{k}+ 1-\E\int_{\R}[1-\exp(-f)]\, \rd \xi_{E,k}},
        \end{equation}
        where $R_k$ is an error term satisfying $R_k\in[0,\E Q_k(Q_k-1)/2]$ for every $k\in\qq{r}$. 
        By the Minami estimate in \eqref{eq_Wegner_Minami_estimate}, we have
        \begin{equation}
            \sum_{k=1}^{r}\frac{1}{2}\E Q_k(Q_k-1)\leq \frac{1}{(N\rho)^2}\sum_{k=1}^{r}(CN_{k}|J|)^2\leq \pa{\frac{C|J|}{N\rho}}^2\cdot 2L\cdot\sum_{k=1}^r N_{k}\leq 2\pa{\frac{C|J|}{\rho}}^2\frac{L}{N}\to 0.
        \end{equation}
        By the Wegner estimate in \eqref{eq_Wegner_Minami_estimate}, we have
        \begin{equation}
            \begin{aligned}
                0\leq \E\int_{\R}[1-\exp(-f)]\, \rd \xi_{E,k}\leq \E Q_k\leq \frac{2C|J|L}{N\rho}\to 0.
            \end{aligned}
        \end{equation}
        Moreover, by the identity \eqref{eq_wh_xi_E_wh_xi_E_k}, for any bounded measurable function $h$ with compact support, we have
        \begin{equation}
            \begin{aligned}
                &\sum_{k=1}^{r}\E\int_{\R}h\, \rd \xi_{E,k}=\E\int_{\R}h\, \rd \wh \xi_{E}=\int_{\R}h(\lambda)\sum_{k=1}^{r}\frac{N_k}{N}\frac{\rho_{W,N_k}(E+\lambda/N\rho)}{\rho_W(E)}\, \rd \lambda=\int_{\R}h(\lambda)\, \rd \lambda\\
                &+\int_{\R}h(\lambda)\sum_{k=1}^{r}\frac{N_k}{N}\frac{\rho_{W,N_k}(E+\lambda/N\rho)-\rho_{W}(E+\lambda/N\rho)}{\rho_W(E)}\, \rd \lambda+\int_{\R}h(\lambda)\frac{\rho_{W}(E+\lambda/N\rho)-\rho_{W}(E)}{\rho_W(E)}\, \rd \lambda.
            \end{aligned}
        \end{equation}
        Consider the second and the third terms on the right-hand side, and recall from \eqref{eq_rho_W_uniform_bounds} that $\rho=\rho_{W}(E)$ is uniformly bounded away from zero. Applying \eqref{eq_rho_W_N_rho_W} and the fact that $N_k/(W^2\log N)\to \infty$ for the second term, together with the equicontinuity of $\ha{\rho_W}_{W=1}^{\infty}$ from \Cref{lemma_rho_W_N} for the third term, we can show that both of these two terms tend to $0$. In sum, we obtain
        \begin{equation}
            \lim_{N\to \infty}\sum_{k=1}^{r}\E\int_{\R}h\, \rd \xi_{E,k}=\int_{\R}h(\lambda)\, \rd \lambda.
        \end{equation}
        Therefore, denoting $R_k':=\E\int_{\R}[1-\exp(-f)]\, \rd \xi_{E,k}$, we have
        \begin{equation}
            \lim_{N\to \infty}\max_{k=1}^{r}|R_k|\vee |R_k'|=0,\quad \lim_{N\to \infty}\sum_{k=1}^{r}R_k=0,\quad\lim_{N\to \infty}\sum_{k=1}^{r}R_k'=\int_{\R}\qa{1-\exp(-f(\lambda))}\, \rd \lambda.
        \end{equation}
        Substituting this back into \eqref{eq_E_f_wh_xi_E_Q_k_product}, by direct calculus, we have
        \begin{equation}
            \begin{aligned}
                \E\exp\pa{-\int_{\R}f\, \rd \wh\xi_{E}}=\prod_{k=1}^{r}\pa{1+R_k-R_k'}\to \exp\pa{-\int_{\R}[1-\exp(-f(\lambda))]\, \rd \lambda}.
            \end{aligned}
        \end{equation}
        This yields the convergence \eqref{eq_wh_xi_to_PPP_d_lambda}.

        Next, we prove the convergence \eqref{eq_xi_to_PPP_d_lambda} by comparing $H$ and $\wh H$. Denote
        \begin{equation}\label{eq_Delta_H_wh_H}
            \Delta:=H-\wh H.
        \end{equation}
        Clearly, for any $f\in C_{c}(\R)$ with $f\geq 0$, we have
        \begin{equation}
            \absa{\E\exp\pa{-\int_{\R}f\, \rd \wh\xi_{E}}-\E\exp\pa{-\int_{\R}f\, \rd \xi_{E}}}\leq \E\absa{\int_{\R}f\, \rd \xi_{E}-\int_{\R}f\, \rd \wh\xi_{E}}.
        \end{equation}
        Therefore, by \eqref{eq_wh_xi_to_PPP_d_lambda}, it suffices to prove, for any $f\in C_{c}(\R)$,
        \begin{equation}\label{eq_E_f_xi_E_f_wh_xi_E}
            \lim_{N\to \infty}\E\absa{\int_{\R}f\, \rd \xi_{E}-\int_{\R}f\, \rd \wh\xi_{E}}=0.
        \end{equation}
        Moreover, for any $f\in L^{1}(\R)$, the definition of $\xi_{E}$ yields
        \begin{align*}
            \absa{\E\int_{\R}f\, \rd \xi_{E}}\leq \E\int_{\R}|f|\, \rd \xi_{E}=\int_{\R}|f(\lambda)|\frac{\rho_{W,N}(E+\lambda/N\rho)}{\rho}\, \rd \lambda\leq C_{E}\|f\|_{1},
        \end{align*}
        and we have $\absa{\E\int_{\R}f\, \rd \wh\xi_{E}}\leq C_{E}\|f\|_{1}$ by a similar argument. Here, we used the bounds \eqref{eq_regularity_rho_W_N} and \eqref{eq_rho_W_uniform_bounds} in the last step. 
        Then, by the Poisson $L^{1}$-approximation (see, e.g., \cite[Theorem 9.5]{rudin1991functional}), the problem can be further reduced to proving \eqref{eq_E_f_xi_E_f_wh_xi_E} for the following family of functions:
        \begin{equation}
            \ha{f_{\zeta}:\lambda\mapsto\mathrm{Im}\, \frac{1}{\lambda-\zeta}:\zeta\in\C_+}.
        \end{equation}
        Fix $\zeta\in\C_+$ and denote $\eta\equiv\eta_{N}:=\mathrm{Im}\, \zeta/N\rho$ for the remainder of the proof of \eqref{eq_xi_to_PPP_d_lambda}. Then, the spectral theorem yields
        \begin{equation}
            \int_{\R}f_{\zeta}\, \rd \xi_{E}=\frac{1}{N\rho}\mathrm{Im}\,\Tr\,G\pa{E+\frac{\zeta}{N\rho}},\qquad \int_{\R}f_{\zeta}\, \rd \wh \xi_{E}=\frac{1}{N\rho}\mathrm{Im}\,\Tr\, \wh G\pa{E+\frac{\zeta}{N\rho}}.
        \end{equation}
        It therefore suffices to show
        \begin{equation}
            \sum_{x\in\ZN}\E\absa{\mathrm{Im}\,G\pa{E+\frac{\zeta}{N\rho}}_{xx}-\mathrm{Im}\,\wh G\pa{E+\frac{\zeta}{N\rho}}_{xx}}=\oo\pa{N}.
        \end{equation}
        To this end, for every $k\in\qq{r}$, let $\mathsf{B}(I_k)\subseteq \ZN$ be the set of indices contained in the boundary blocks of $I_k$, and denote
        \begin{equation}
            \mathsf{B}:=\bigcup_{k=1}^{r}\mathsf{B}(I_k),\qquad \mathsf{B}_{\ell}:=\ha{x\in\ZN:\mathrm{dist}(x,\mathsf{B})<\ell},\qquad \forall \ell\in \Z_+.
        \end{equation}
        For the remainder of the proof of \eqref{eq_xi_to_PPP_d_lambda}, we also set
        \begin{equation}
            \ell\equiv\ell_{W,N}:=W\ceil{AW\log N},
        \end{equation}
        where $A\geq 1$ is a fixed large constant, to be chosen below. Clearly, we have, for some large constant $C\geq 1$,
        \begin{equation}
            |\mathsf{B}_{\ell}|\leq C r\ell,
        \end{equation}
        and the assumption \eqref{eq_localization_assumption} provides
        \begin{equation}
            \lim_{N\to\infty} \ell/L=0.
        \end{equation}
        Then, by the bound \eqref{eq_E_Im_G_xx_bound}, we have, for some large constant $C\geq 1$,
        \begin{equation}
            \sum_{x\in\mathsf{B}_{\ell}}\E\absa{\mathrm{Im}\,G\pa{E+\frac{\zeta}{N\rho}}_{xx}-\mathrm{Im}\,\wh G\pa{E+\frac{\zeta}{N\rho}}_{xx}}\leq CN\cdot \frac{r\ell}{N}\leq CN\cdot\frac{\ell}{L}=\oo(N).
        \end{equation}
        It remains to bound
        \begin{equation}
            \sum_{x\notin\mathsf{B}_{\ell}}\E\absa{\mathrm{Im}\,G\pa{E+\frac{\zeta}{N\rho}}_{xx}-\mathrm{Im}\,\wh G\pa{E+\frac{\zeta}{N\rho}}_{xx}}.
        \end{equation}
        Define the deterministic support of $\Delta$ (recall \eqref{eq_Delta_H_wh_H}) as
        \begin{equation}
            \cD\equiv \cD_{W,N}:=\ha{(a,b)\in\ZN^2:\E|\Delta_{ab}|^2\neq 0}.
        \end{equation}
        Since $\ell=\oo\pa{L}$, for some large constant $C\geq 1$, the set $\cD$ satisfies the following three properties:
        \begin{equation}\label{eq_cD_properties}
            |\cD|\leq CrW^2,\qquad \sup_{(a,b)\in\cD}\E|\Delta_{ab}|^2\leq CW^{-1},\qquad \cD\subseteq \mathsf{B}\times\mathsf{B}.
        \end{equation}
        Then, for any $x\notin\mathsf{B}_{\ell}$ and $s\in(0,1/2)$, applying the operator norm bound $\|\wh G\|\leq \eta^{-1}$ and H{\" o}lder's inequality, we obtain
        \begin{equation}
            \begin{aligned}
                &\E\absa{G\pa{E+\frac{\zeta}{N\rho}}_{xx}-\wh G\pa{E+\frac{\zeta}{N\rho}}_{xx}}\leq \sum_{(a,b)\in\cD}\E\absa{G\pa{E+\frac{\zeta}{N\rho}}_{xa}}\cdot|\Delta_{ab}|\cdot\absa{\wh G\pa{E+\frac{\zeta}{N\rho}}_{bx}}\\
                &\leq \eta^{s-2}\sum_{(a,b)\in\cD}\E\absa{G\pa{E+\frac{\zeta}{N\rho}}_{xa}}^{s}\cdot|\Delta_{ab}|\leq \eta^{-2}\sum_{(a,b)\in\cD}\qa{\E\absa{G\pa{E+\frac{\zeta}{N\rho}}_{xa}}^{2s}}^{1/2}\pa{\E|\Delta_{ab}|^2}^{1/2}.
            \end{aligned}
        \end{equation}
        Taking $s=q/2$, where $q$ is the constant from the fractional moment bound \eqref{eq_fractional_decay_bound_H_E_i_eta}, and applying the bounds in \eqref{eq_cD_properties}, we can bound $G\pa{E+\zeta/N\rho}_{xx}-\wh G\pa{E+\zeta/N\rho}_{xx}$ as
        \begin{equation}
            \E\absa{G\pa{E+\frac{\zeta}{N\rho}}_{xx}-\wh G\pa{E+\frac{\zeta}{N\rho}}_{xx}}\leq C rW^{C}\eta^{-2}\exp\pa{-c\ell/W^2}\leq C'N^{C'}\exp(-c'\ell/W^2).
        \end{equation}
        Here, $C,C'\geq 1$ and $c,c'\in(0,1)$ are $A$-independent constants. Taking $A$ to be large enough, e.g., $c'A\geq 100C'$, yields, uniformly in $x\notin \mathsf{B}_{\ell}$,
        \begin{equation}
            \E\absa{G\pa{E+\frac{\zeta}{N\rho}}_{xx}-\wh G\pa{E+\frac{\zeta}{N\rho}}_{xx}}=\oo\pa{1}.
        \end{equation}
        Therefore, we obtain
        \begin{equation}
            \sum_{x\notin\mathsf{B}_{\ell}}\E\absa{\mathrm{Im}\,G\pa{E+\frac{\zeta}{N\rho}}_{xx}-\mathrm{Im}\,\wh G\pa{E+\frac{\zeta}{N\rho}}_{xx}}=\oo\pa{N}.
        \end{equation}
        This proves the convergence \eqref{eq_xi_to_PPP_d_lambda}. Then, the convergence \eqref{eq_convergence_to_rho_sc_E_d_lambda} follows from a direct rescaling and \eqref{eq_rho_W_to_rho_sc}.

        Finally, we prove the convergence of correlation functions. Given any finite interval $J$, by the Minami estimate in \eqref{eq_Wegner_Minami_estimate}, for some large constant $C\geq 1$, we have uniformly in $N\geq 1$ and $k\geq 1$,
        \begin{equation}
            \E(\xi_{E,W,N}(J))_{k}\leq (C_{E}|J|)^{k}.
        \end{equation}
        Here, we employ the notation $(\cdot)_{l}$, defined for any $k\geq 0$ and $l\geq 1$ by
        \begin{equation}
            (k)_{l}:=
            \begin{cases}
                k(k-1)\cdots (k-l+1), & l\leq k,\\
                0, & l>k.
            \end{cases}
        \end{equation}
        This immediately implies, for any fixed integer $p\geq 1$,
        \begin{equation}
            \sup_{W,N}\E|\xi_{E,W,N}(J)|^p<\infty.
        \end{equation}
        These bounds verify the moment hypotheses of \cite[Theorem 2.2]{zessin1983method}, and therefore establish the convergence of correlation functions. This completes the proof of \Cref{theorem_Poisson_process}.
    \end{proof}

     \section{Open Problems}\label{sec_open_problems}
     In this section, we discuss some related open problems.
     \begin{enumerate}
         \item {\bf Universality of the Poisson statistics for random band matrices.} The techniques in this work largely rely on the Gaussian distribution of the entries. In contrast, the localization of eigenvectors in \cite{Localization1_2}, the delocalization of eigenvectors and the asymptotic RMT statistics of eigenvalues in \cite{erdos2025zigzagstrategyrandomband} apply to broader classes of entry distributions. Therefore, the question is, how far can the present Poisson statistics result be extended beyond the Gaussian setting? In particular, what local statistics arise for singular entry distributions, such as Bernoulli distributions, in the localized regime?

         \item {\bf Beyond compactly supported variance profiles.} The method in this work and \cite{Localization1_2} can be extended to random band matrices with general compactly supported variance profiles, including the sharp-cutoff profile. On the other hand, compared to the results in the delocalized phase for general rapidly decaying profiles \cite{erdos2025zigzagstrategyrandomband} and power-law profiles in \cite{fan2026localizationlengthspowerlawrandom}, existing localization results apply to a more restricted class of profiles. The extension of the current methods to these profiles seems to face several difficulties.

         \item {\bf Sharp transition of the local statistics and the critical statistics.} \Cref{theorem_Poisson_process} in this work proves the asymptotic Poisson statistics for $W\ll \sqrt{N/\log N}$, and the results from \cite{Band1D,erdos2025zigzagstrategyrandomband} establish the asymptotic RMT statistics for $W\geq N^{1/2+c}$, where $c>0$ is a fixed arbitrarily small constant. However, the physical conjecture predicts a sharp Poisson-RMT transition from $W\ll \sqrt{N}$ to $W\gg \sqrt{N}$. Furthermore, a similar sharp transition has been rigorously studied at the edge in \cite{liu2025edgestatisticsrandomband}. Therefore, it is natural to determine the asymptotic behavior of the local statistics in the remaining range $\sqrt{N/\log N}\ll W\leq N^{1/2+c}$, including the critical regime $W\asymp \sqrt{N}$.

         \item {\bf Dynamical localization and long-time quantum evolution.} Suppose $W^2\ll N$ and consider the quantum transition probability, defined as
         \begin{equation}
             P_{x}(t)\equiv P_{x}(t,W,N):=\E|\exp(\ii t H_{W,N})_{0x}|^2,\qquad \forall x\in \ZN,~ t\in[0,\infty).
         \end{equation}
         By the $t\sim \eta^{-1}$ correspondence of the quantum evolution, the regime $0\leq t\lesssim W^2$ lies in the range of applicability of local law. For $t\gg W^2$, \cite[Theorem 1]{Localization1_2} implies a dynamical localization of this transition probability. Then, it is natural to study the asymptotic behavior of the transition probability $\ha{P_{x}(t)}_{x\in\ZN}$ for $t\gg W^2$, and try to establish an analogue of \cite[Theorem 3.1]{erdHos2011quantum1}.

         \item {\bf Localization at the spectral edge.} At the spectral edge, local statistics are well-studied in \cite{liu2025edgestatisticsrandomband,liu2025edgeuniversalityinhomogeneousrandom,liu2026edgeuniversalityinhomogeneousrandom}. However, for eigenvectors, \cite{Localization1_2} only provides the exponential decay bound on the scale $W^2$, which is the correct conjectured localization length of the bulk eigenvectors. At the spectral edge, the conjectured localization length becomes $W^{6/5}$. Achieving this improvement seems to require new ideas and techniques.
     \end{enumerate}

\end{document}